\numberwithin{equation}{section}
\newtheorem{thm}{Theorem}[section]
\newtheorem{cor}[thm]{Corollary}
\newtheorem{lem}[thm]{Lemma}
\newcommand{\CC}{{\mathbb C}}
\newcommand{\NN}{{\mathbb N}}
\def\BB{ \mathbb{B}}
\def\SS{ \mathbb{S}}
\def\om{\omega}
\def\msk{\medskip}
\begin{document}
	\title[Difference of weighted composition operators]{Difference of weighted composition operators on weighted Bergman spaces over the unit Ball }
	\author{ Lian Hu,  Songxiao Li$^\ast$ and Yecheng Shi}
	\address{Lian Hu\\ Institute of Fundamental and Frontier Sciences, University of Electronic Science and Technology of China,
		610054, Chengdu, Sichuan, P.R. China.}	\email{hl152808@163.com  }

	\address{Songxiao Li\\ Department of Mathematics, Shantou University, Shantou Guangdong 515063, P.R. China. }
	\email{jyulsx@163.com}
	
\address{Yecheng Shi\\    School of Mathematics and Statistics, Lingnan Normal University,  Zhanjiang 524048, Guangdong, P. R. China}  \email{  09ycshi@sina.cn}

	\subjclass[2010]{32A36, 47B38}
	\begin{abstract}   In this paper, we characterize the boundedness and compactness  of differences of weighted composition operators from weighted Bergman spaces $A^p_\om$ induced by a doubling weight $\om$ to Lebesgue spaces $L^q_\mu$ on the unit ball for full $0<p,q<\infty$, which extend many results on the unit disk. As a byproduct, a new characterization of $q$-Carleson the measure for $A^p_\om$ in terms of the Bergman metric ball is also presented.
		\thanks{$\ast$ Corresponding author.}
		\vskip 3mm \noindent{\it Keywords}:Difference, Weighted Bergman space, weighted composition operator, doubling weight, Carleson measure.
		\end{abstract}	\maketitle
	
	\section{Introduction}
	Denote by $H(\BB_n)$ and $\mathcal{S}(\BB_n)$ the space of analytic functions and holomorphic self-maps on the unit ball $\BB_n$ of the complex $n$-space $\CC^n$, respectively.
Given $\varphi \in \mathcal{S}(\BB_n)$ and a Borel function $u$ on $\BB_n$, the weighted composition operator $uC_\varphi$ with symbol $\varphi$ and weight $u$
is defined by 
$$
uC_\varphi f= f\circ \varphi
$$
for $f\in H(\BB_n)$. 
When $ u\equiv 1$, the operator $uC_\varphi$ becomes the classical composition operator $C_\varphi$.
The  theory of (weighted) composition operators on various settings has been extensively studied in recent decades
and  we refer to the books \cite{cm} and \cite{s}.  
The difference of composition operators has become an attractive topic in various areas of function theory and operator theory.
Recently,
Choe et al. [\citen{ccky,ccky2}] completely characterized the bounded and compact differences of weighted composition operators from   standard weighted Bergman spaces to Lebesgue spaces on the unit disk.
Subsequently, Choe et al. \cite{cckp} extend their results to the unit ball.  
Chen \cite{c} generalized their results to the doubling weight setting and characterized the difference of composition operators from weighted Bergman spaces $A^p_\om$ to Lebesgue spaces $L^q_\mu$ for full $0<p,q<\infty$.
The main purpose of  the current paper is to generalize the main results in \cite{c} to the unit ball setting.

To begin with, we recall some definitions. 	A positive, measurable and  integrable  function $\omega$, satisfying $\om(z)=\om(|z|)$ for all $z\in\BB_n$, is called a radial weight on $\BB_n$. Put  $\hat{\omega}(r)=\int_r^1\omega(s)ds$ for $r\in [0,1)$. 
A radial weight  $\omega$ is called a doubling weight, denoted by $\omega\in \hat{\mathcal{D}}$, if there is a constant $C=C(\om)\ge 1$ such that  $$
\hat{\omega}(r)<C\hat{\omega}\left( { r+1 \over 2 }\right)  {\rm ~for ~} r\in [0,1).$$
A radial weight  $\omega$ is called a reverse doubling weight, denoted by $\om\in \breve{\mathcal{D}}$, if  there exists  constants $C=C(\om),K=K(\om)>1$ such that
$$
\hat{\omega}(r)\ge C\hat{\omega}\left(1-{1-r\over K}\right) {\rm ~for ~} r\in [0,1).
$$
Define $\mathcal{D}=\hat{\mathcal{D}}\cap\breve{\mathcal{D}}$, called the two-sides doubling weight.
	See \cite{p,pr} and the references therein for more results on  doubling weights.

For $0<p<\infty$ and a weight function $\omega$, the weighted Bergman space $A^p_\omega$  is the space of all functions $f\in H(\BB_n)$ such that 
$$
\|f\|^p_{A^p_\omega}= \int_{\BB_n}|f(z)|^p\omega(z) dV (z) <\infty,
$$
where $dV$ is the normalized volume measure on $\BB_n$. Clearly, $A^p_\om$ is a Banach space when $1\le p<\infty$ and a complete metric space with the distance $\rho(f,g)=\|f-g\|^p_{A^p_\om}$ when $0<p<1$. 
Taking $\om(z)={\Gamma(n+\alpha+1)  \over \Gamma(n+1)\Gamma(\alpha+1) }(1-|z|^2)^\alpha,z\in \BB_n$ with $\alpha>-1$,  $A^p_\om$ becomes the standard weighted Bergman space $A^p_\alpha$. 
 
 To present our results,  we introduce several notations.
 For $\varphi,\psi \in \mathcal{S}(\BB_n)$, put
 $$
 \rho(z):=\rho(\varphi(z),\psi(z)),\,\,z\in\BB_n,
 $$
where $\rho(\cdot,\cdot)$ denotes the pseudohyperbolic distance on $\BB_n$; see Sect. 2.2.
Given a positive Borel measure $\nu$ on $\BB_n$, the pullback measure $\nu\circ\varphi^{-1}$ is defined by $$ (\nu\circ\varphi^{-1})(E)= \nu[\varphi^{-1}(E)]$$
 for Borel sets $E\subset \BB_n$.
Let $0<r<1$, $0<\alpha,q<\infty$, $u,v$ be weights and $\mu$ be  a positive Borel measure  on $\BB_n$.
The pullback measures $ \eta$ and $ \sigma_r$ 
 are defined by
$$
\eta=(|\rho u|^q d\mu)\circ\varphi^{-1} + (|\rho v|^q d\mu)\circ\psi^{-1},
$$
and
$$
\sigma_r = (\chi_{G_r}|u-v|^qd\mu)\varphi^{-1} + (\chi_{G_r}|u-v|^qd\mu)\psi^{-1},
$$
respectively, where $\chi_{G_r}$ is the characteristic function of the set
$$
G_r:= \{z\in \BB_n:\rho(z)<r\}.
$$

Recall that for a Banach space $X$ of analytic functions  and a positive Borel measure $\mu$ on $\BB_n$,   $\mu$ is called a (varnishing) $q$-Carleson measure for $X$ if the identity operator $I_d:X\to L^q_\mu$ is bounded (compact).
In this paper, we completely characterized the boundededness and compactness of differences of weighted composition operators from   weighted Bergman spaces $ A^p_\om$ to Lebesgue spaces $ L^q_\mu$ for full $0<p,q<\infty$ on the unit ball. 
Our main results are divided into two cases: $p\le q$ and $  p>q$, which are stated as follows.

\begin{thm}\label{pqbddcom}
  	Let $0<p\le q<\infty$, $0<r<1$,  $\om\in \mathcal{D}$, $\varphi,\psi \in \mathcal{S}(\BB_n)$, $u,v$ be weights 
	and $\mu$ be a positive Borel measure on $\BB_n$. 
  Then 
   $ uC_\varphi-vC_\psi: A^p_\om \to L^q_\mu$ is bounded (resp. compact) if and only if
  $\eta+ \sigma_r$ is a (resp. varnishing) $q$-Carleson measure for $A^p_\om$.
\end{thm}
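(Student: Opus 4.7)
Following the disk-case strategy of Chen \cite{c} and the earlier unit-ball work of Choe et al.\ \cite{cckp}, the proof begins with the pointwise split
$$u(z)f(\varphi(z))-v(z)f(\psi(z))=u(z)\bigl[f(\varphi(z))-f(\psi(z))\bigr]+\bigl(u(z)-v(z)\bigr)f(\psi(z)),$$
combined with the dichotomy $\BB_n=G_r\cup G_r^c$. On $G_r^c$ one has the free factor $\rho(z)^q/r^q\ge 1$ producing the measure $\eta$; on $G_r$ the summand $(u-v)f\circ\psi$ directly produces $\sigma_r$, whereas the summand $u(f\circ\varphi-f\circ\psi)$ must be re-absorbed into $\eta$ via a pseudohyperbolic Lipschitz estimate.

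\textbf{Sufficiency.} Suppose $\eta+\sigma_r$ is a $q$-Carleson measure for $A^p_\om$, so that $\eta$ and $\sigma_r$ individually are. For $f\in A^p_\om$, the crude bound $|uf\circ\varphi-vf\circ\psi|^q\le 2^q(|uf\circ\varphi|^q+|vf\circ\psi|^q)$ together with $1\le(\rho(z)/r)^q$ on $G_r^c$ gives
$$\int_{G_r^c}|uC_\varphi f-vC_\psi f|^q\,d\mu\;\le\;\tfrac{2^q}{r^q}\int_{\BB_n}|f|^q\,d\eta\;\lesssim\;\|f\|_{A^p_\om}^q.$$
On $G_r$ the $(u-v)f\circ\psi$ piece is controlled by $\int|f|^q\,d\sigma_r$. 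For the piece $u(f\circ\varphi-f\circ\psi)$ I invoke the standard Bergman-metric Lipschitz estimate
$$|f(\varphi(z))-f(\psi(z))|^q\;\lesssim\;\rho(z)^q\,\frac{1}{V(E(z))}\int_{E(z)}|f|^q\,dV,\qquad \rho(z)<r,$$
where $E(z)$ is a fixed-radius Bergman-metric ball containing both $\varphi(z)$ and $\psi(z)$. A Fubini/covering argument combined with the Bergman-ball characterization of $q$-Carleson measures for $A^p_\om$ (the byproduct announced in the abstract) then absorbs this piece into a multiple of $\int|f|^q\,d\eta\lesssim\|f\|_{A^p_\om}^q$.

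\textbf{Necessity.} Assume conversely that $T=uC_\varphi-vC_\psi:A^p_\om\to L^q_\mu$ is bounded. I recover $\eta$ and $\sigma_r$ as Carleson measures via suitable test functions. For $\sigma_r$, I pick for each $a\in\BB_n$ a function $f_a\in A^p_\om$ with controlled norm whose modulus is essentially constant on a pseudohyperbolic neighbourhood of $a$ of radius exceeding $r$; on $G_r$ one then has $f_a(\varphi(z))\approx f_a(\psi(z))$, so the identity above forces the lower bound $|Tf_a(z)|\gtrsim|u(z)-v(z)|\,|f_a(\varphi(z))|$, and the $L^q_\mu$-norm inequality combined with the Bergman-ball test yields the Carleson estimate for $\sigma_r$. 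For $\eta$, the pseudohyperbolic separation of $\varphi(z)$ and $\psi(z)$ on $G_r^c$ allows one to choose $f_a$ peaked near $\varphi(z_0)$ but small near $\psi(z_0)$ (using the decay of $A^p_\om$ reproducing kernels), giving $|Tf_a(z)|\gtrsim|u(z)f_a(\varphi(z))|$; since $\rho(z)\sim 1$ there, this supplies the $\varphi$-half of $\eta$, and a symmetric choice handles the $\psi$-half.

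\textbf{Compactness and main obstacle.} The compact case follows by the standard reduction: $T$ is compact if and only if $\|Tf_k\|_{L^q_\mu}\to 0$ for every bounded sequence $f_k\in A^p_\om$ converging to $0$ uniformly on compact subsets of $\BB_n$, and normalised reproducing kernels at points approaching $\partial\BB_n$ supply such sequences, converting ``Carleson'' to ``vanishing Carleson'' throughout the above arguments. The main obstacle is the doubling-weight geometry on the ball: in the absence of the classical $A^p_\alpha$ tent decompositions, one must first establish the Bergman-ball $q$-Carleson characterization for $A^p_\om$ and obtain sharp two-sided $L^p$-norm estimates for the $A^p_\om$ reproducing kernels; once these ingredients are in place, the scheme above extends uniformly across $0<p\le q<\infty$, with the usual power substitutions and sub-additivity adjustments when $p$ or $q$ lies below $1$.
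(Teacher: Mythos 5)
Your sufficiency argument is essentially the paper's: split over $G_r$ and $\BB_n\setminus G_r$, use $1\le\rho(z)/r$ off $G_r$ to produce $\eta$, and on $G_r$ decompose into $(u-v)f\circ\psi$ (giving $\sigma_r$) plus a term controlled by a pseudohyperbolic Lipschitz estimate (the paper's Lemma \ref{fabD}) and Fubini; the paper runs the Lipschitz bound with the exponent $p$ and the twisted weight $W$ so as to land directly in the $\hat{\mu}_{\om,r,q/p}$ characterization of Theorem \ref{pqCarleson}, but your variant is workable. The compactness reduction via Lemma \ref{cpt} is also as in the paper.

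The necessity direction, however, has a genuine gap. On $G_r$ you argue that since $|f_a(\varphi(z))|\approx|f_a(\psi(z))|$, the identity forces $|Tf_a(z)|\gtrsim|u(z)-v(z)|\,|f_a(\varphi(z))|$. This does not follow: writing $Tf_a=(u-v)f_a\circ\varphi+v\,(f_a\circ\varphi-f_a\circ\psi)$, closeness of the \emph{moduli} of $f_a(\varphi(z))$ and $f_a(\psi(z))$ says nothing about the second term, whose phase oscillation can cancel the first; a single peaked kernel cannot rule this out. The same cancellation defeats your treatment of $\eta$ on $G_r^c$: for $z\in\varphi^{-1}(D(a,R))$ with $\rho(z)\ge r$, the point $\psi(z)$ may still lie where $f_a$ is large, so $|u f_a(\varphi(z))-v f_a(\psi(z))|$ can be small even though $|u(z)|\,|f_a(\varphi(z))|$ is large. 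This decoupling problem is exactly why the joint measure $\eta_{\varphi,u}+\sigma_{\varphi,r}$ must be recovered all at once: the paper does this via Lemma \ref{muR} (from Choe--Choi--Koo--Park), which bounds $(\eta_{\varphi,u}+\sigma_{\varphi,r})(D(te_1,R))$ by $\sum_{b\in J_N(te_1)}\int_{\varphi^{-1}(D(te_1,r))}|u-vQ_b^s|^q\,d\mu$ for a family $J_N(te_1)$ of $2n+1$ test points $\{te_1,\,a^{j,N^2},\,a^{j,N^3}\}$ chosen at two scales in $n$ orthogonal directions, where $Q_b=(1-\langle b,\varphi\rangle)/(1-\langle b,\psi\rangle)$; each summand is then read off from $\|(uC_\varphi-vC_\psi)f_{b,s}\|_{L^q_\mu}^q$ with $f_{b,s}=K_b^s/\|K_b^s\|_{A^p_\om}$, and a unitary rotation reduces general $z$ to the axis $te_1$. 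Without this (or an equivalent multi-point testing scheme) the necessity half is not proved.
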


\begin{thm} \label{qpcom}
	Let $0<q<p<\infty$, $0<r<1$, $\om\in \mathcal{D}$, $\varphi,\psi \in \mathcal{S}(\BB_n)$, $u,v$ be weights  
	and 
	$\mu$ be a positive Borel measure on $\BB_n$. 
	Then the following statements are equivalent:
	\begin{enumerate}
		\item[(i)] $ uC_\varphi-vC_\psi: A^p_\om \to L^q_\mu$ is bounded;
		\item[(ii)] $ uC_\varphi-vC_\psi: A^p_\om \to L^q_\mu$ is compact;
		\item[(iii)] $\eta+ \sigma_r$ is a $q$-Carleson measure for $A^p_\om$.
	\end{enumerate}
\end{thm}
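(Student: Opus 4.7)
The plan is to prove the cyclic chain $(ii)\Rightarrow(i)\Rightarrow(iii)\Rightarrow(ii)$; the first implication is immediate since every compact operator is bounded.

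For $(iii)\Rightarrow(ii)$: first control $|uC_\varphi f - vC_\psi f|^q$ pointwise. On the complement of $G_r$, the inequality $\rho(z)\ge r$ gives
\[
|u f\circ\varphi - v f\circ\psi|^q(z) \lesssim r^{-q}\bigl(|\rho u|^q |f\circ\varphi|^q + |\rho v|^q |f\circ\psi|^q\bigr)(z).
\]
On $G_r$, split $uf\circ\varphi - vf\circ\psi = (u-v)f\circ\varphi + v(f\circ\varphi - f\circ\psi)$ and dominate the oscillation $|f(\varphi(z))-f(\psi(z))|$ by a constant times $\rho(z)$ times a local Bergman-metric $L^q$-average of $f$ near $\psi(z)$, coming from the submean-value inequality for $A^p_\omega$ functions. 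Integrating against $d\mu$ and changing variables yields
\[
\int_{\BB_n} |uC_\varphi f - vC_\psi f|^q\, d\mu \lesssim \int_{\BB_n} |f|^q\, d(\eta+\sigma_r),
\]
so (iii) implies boundedness. In the range $q<p$, the Luecking-style $q$-Carleson measure theorem for $A^p_\omega$ via Bergman balls (the by-product advertised in the abstract) further guarantees that such an embedding $A^p_\omega\hookrightarrow L^q_{\eta+\sigma_r}$ is automatically \emph{compact}, giving (ii).

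For $(i)\Rightarrow(iii)$: adapt Luecking's test-function scheme to the ball. Fix a sufficiently separated Bergman-metric $\delta$-lattice $\{a_j\}$ together with its standard disjointified covering, and select normalized test functions $\{f_j\}\subset A^p_\omega$ (of the sort constructed in \cite{p,pr} for doubling weights) satisfying $\|f_j\|_{A^p_\omega}\asymp 1$, a uniform lower bound $|f_j(z)|\gtrsim 1$ on the Bergman ball $B_j$ around $a_j$, and the usual polynomial off-diagonal decay. For $\lambda=(\lambda_j)\in\ell^p$ and Rademacher functions $\{\varepsilon_j(t)\}$, the random function $F_t=\sum_j \lambda_j \varepsilon_j(t) f_j$ obeys $\|F_t\|_{A^p_\omega}\lesssim\|\lambda\|_{\ell^p}$. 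Applying (i) to $F_t$, taking $L^q(dt)$-norms, and combining Khinchin's inequality with the pointwise lower bound and a change of variables translates the operator estimate into
\[
\sum_j |\lambda_j|^q (\eta+\sigma_r)(B_j) \lesssim \|\lambda\|_{\ell^p}^q\qquad\text{for every }\lambda\in\ell^p,
\]
and $\ell^{p/q}$--$\ell^{(p/q)'}$ duality converts this into $\bigl((\eta+\sigma_r)(B_j)\bigr)_j\in\ell^{(p/q)'}$, which is precisely the Luecking characterization of $q$-Carleson measures for $A^p_\omega$ in the $q<p$ regime.

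The main obstacle lies in the $G_r$-decomposition of the first step: the oscillation $|f\circ\varphi - f\circ\psi|$ must be absorbed cleanly into $\eta$ with the correct $\rho$-weight, while the $(u-v)$ piece produces \emph{exactly} $\sigma_r$ and no spurious cross-terms. This is where a sharp Bergman-metric Lipschitz inequality for $A^p_\omega$ functions on the ball becomes indispensable, and also the point where the two-sided doubling hypothesis $\omega\in\mathcal{D}$ is essential, since both that Lipschitz estimate and the test-function construction used in $(i)\Rightarrow(iii)$ rest on the doubling and reverse doubling properties of $\omega$.
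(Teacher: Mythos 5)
Your sufficiency direction $(iii)\Rightarrow(ii)$ follows essentially the paper's route (split over $G_r$ and its complement, use the Bergman-metric Lipschitz estimate of Lemma \ref{fabD} for the oscillation term, then invoke the automatic compactness of the embedding in the range $q<p$ from Theorem \ref{qpCarleson}), and in outline it is fine, modulo the small point that the oscillation term is dominated by $\int |f|^q\,\hat{\eta}_{\om,\tilde r}\,W\,dV$ rather than literally by $\int|f|^q\,d\eta$.

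The necessity direction $(i)\Rightarrow(iii)$ has a genuine gap. You transplant Luecking's scheme as if the operator were a single weighted composition operator: one lattice $\{a_j\}$, one test function $f_j$ per node with $|f_j|\gtrsim 1$ on $B_j$, Khinchine, duality. But after Khinchine what you control is $\int\bigl(\sum_j|\lambda_j|^2\,|u\,f_j\circ\varphi-v\,f_j\circ\psi|^2\bigr)^{q/2}d\mu$, and there is no pointwise lower bound on $|u\,f_j\circ\varphi-v\,f_j\circ\psi|$ in terms of $|f_j\circ\varphi|$ or $|f_j\circ\psi|$ separately: the two terms can cancel, and this cancellation is precisely what makes the difference operator smaller than its pieces. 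Your claimed inequality $\sum_j|\lambda_j|^q(\eta+\sigma_r)(B_j)\lesssim\|\lambda\|_{\ell^p}^q$ therefore does not follow from the stated ingredients; the target measure $\eta+\sigma_r$ involves $|\rho u|^q$, $|\rho v|^q$ and $\chi_{G_r}|u-v|^q$, none of which is visible in a single quantity $|u-vQ_b^s|$. The paper resolves this by testing not at one point per node but at the whole family $J_N(a_k)=\{a_k,a_k^{j,N^2},a_k^{j,N^3}\}_{j=1}^n$ of $2n+1$ nearby points, so that the resulting quantities $R_{s,r,q}(a_k,b)=\int_{\varphi^{-1}(D(a_k,r))}|u-vQ_b^s|^q d\mu$, summed over $b\in J_N(a_k)$, dominate $(\eta_{\varphi,u}+\sigma_{\varphi,r})(D(a_k,R))$; this is Lemma \ref{muR} (Lemma 3.14 of \cite{cckp}) and it is the heart of the whole argument. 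Two further ingredients you omit are then forced on you: the shifted sequences $\{a_k^{j,N}\}$ need not be separated, so one needs the decomposition Lemma \ref{decomlem} to write them as finite unions of separated sequences before Lemma \ref{Flambda} can be applied to bound $\|F_t\|_{A^p_\om}$; and the interior region $t_0\overline{\BB_n}$ must be handled separately with the test functions $1$ and $\mathrm{id}$, as in (\ref{nuEQ}). Without the multi-point testing step your proof establishes nothing about $\eta+\sigma_r$.
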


This paper is organized as follows. 
 We give some preliminaries in Section 2. In Section 3, some new characterizations of the (varnishing) $q$-Carleson measure for $A^p_\om$ for full $0<p,q<\infty$ are given. In Section 4, we prove our main results.

Throughout this paper, let $ C$ be a positive constant that may change from one step to the next. For nonnegative quantities $A$ and $B$, we say that $A\lesssim B$ if there is a constant $C>0$ such that $A\leq CB$. The symbol $A\approx B$ means that $A\lesssim B\lesssim A$.
\msk

	\section{Preliminaries}	
In this section, we collect some basic facts that are important to our proof.

\subsection{Compact operator}
Let $X,Y$ be topological vector spaces.  A linear map $T:X\to Y$ is said to be compact if the image of any bounded sequence in $X$ has a subsequence that converges in $Y$.
To study the compactness of $T:A^p_\om\to L^q_\mu$, we need the following lemma. Its verification is a simplified modification of the proof for Proposition 3.11 in \cite{cm}.

\begin{lem}\label{cpt}
	Let $0<p,q<\infty$, $\om\in \mathcal{D}$ and $\mu$ be a positive Borel measure on $\BB_n$. Suppose that $ T:A^p_\om\to L^q_\mu$ is linear and bounded. Then $ T:A^p_\om\to L^q_\mu$ is compact if and only if $ Tf_k\to 0$ in $L^q_\mu$, where $\{f_k\}$ is bounded in $A^p_\om$  and converges to $0$ uniformly on any compact subset of $\BB_n$.
\end{lem}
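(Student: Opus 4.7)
My plan is to follow the template of the classical argument \cite[Proposition 3.11]{cm}, using two ingredients that are standard in our setting: for $\om\in\mathcal{D}$, point evaluations on $A^p_\om$ are locally uniformly bounded, and Fatou's lemma promotes a locally uniform limit of a bounded $A^p_\om$-sequence to an element of $A^p_\om$.

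For the sufficiency direction, I will take an arbitrary bounded sequence $\{g_k\}\subset A^p_\om$. By Montel's theorem, a subsequence $\{g_{k_j}\}$ converges uniformly on every compact subset of $\BB_n$ to some $g\in H(\BB_n)$, and Fatou's lemma applied to $|g_{k_j}|^p\om$ gives $g\in A^p_\om$. Then $\{g_{k_j}-g\}$ is bounded in $A^p_\om$ and converges to $0$ locally uniformly, so by hypothesis $T(g_{k_j}-g)\to 0$ in $L^q_\mu$, whence $Tg_{k_j}\to Tg$ and $T$ is compact.

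For the necessity direction, I will argue by contradiction. If $T$ is compact and $\{f_k\}$ is bounded in $A^p_\om$ with $f_k\to 0$ locally uniformly but $\|Tf_k\|_{L^q_\mu}\not\to 0$, then after extracting a subsequence $\|Tf_{k_j}\|_{L^q_\mu}\ge\varepsilon$, and compactness of $T$ yields a further subsequence $Tf_{k_{j_\ell}}\to h$ in $L^q_\mu$ with $\|h\|_{L^q_\mu}\ge\varepsilon$. The main obstacle is identifying $h=0$: since $T$ is abstract, $Tf_k$ has no intrinsic pointwise meaning, so the identification must proceed by duality. For $1\le p<\infty$, one observes that local uniform convergence of a bounded sequence in $A^p_\om$ forces weak convergence in $A^p_\om$ (using that the dual is generated by integral pairings well-behaved under dominated convergence, which is available because $\om\in\mathcal{D}$ gives good pointwise control), and then the general fact that compact operators map weakly convergent sequences to strongly convergent ones forces $h=0$. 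The case $0<p<1$ is treated analogously via the continuous linear functionals on the complete metric space $(A^p_\om,\rho)$ with $\rho(f,g)=\|f-g\|_{A^p_\om}^p$. Once $h=0$ is in place, the lower bound $\|h\|_{L^q_\mu}\ge\varepsilon$ is contradicted and the proof is complete.
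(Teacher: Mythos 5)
Your sufficiency argument is the standard one (Montel plus Fatou applied to an arbitrary bounded sequence, then the hypothesis applied to $g_{k_j}-g$) and is correct; it matches what the paper intends by citing Proposition 3.11 of \cite{cm}. The problem is your necessity direction. In the cited argument, and in every application of this lemma in the paper ($T=I_d$ or $T=uC_\varphi-vC_\psi$), the limit $h$ of the convergent subsequence $Tf_{k_{j_\ell}}$ is identified as $0$ \emph{pointwise}: for such evaluation-type operators one has $Tf_k(z)\to 0$ for every $z\in\BB_n$ (because $\{\varphi(z),\psi(z)\}$ is a compact set), and norm convergence in $L^q_\mu$ gives $\mu$-a.e.\ convergence along a further subsequence, forcing $h=0$ $\mu$-a.e. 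You replace this by a duality argument: bounded plus locally uniformly null implies weakly null in $A^p_\om$, and compact operators are completely continuous. That works for $1<p<\infty$ (reflexivity of $A^p_\om$, Eberlein--\v{S}mulian, and continuity of point evaluations identify every weak subsequential limit as $0$).

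For $0<p\le 1$, however, this route has a genuine gap. The claim that a bounded, locally uniformly null sequence in $A^p_\om$ converges weakly to $0$ is already false for $p=1$: in $A^1(\DD)$ the normalized kernels $K_a/\|K_a\|_{A^1}$, $K_a(z)=(1-\bar a z)^{-2}$, are norm-bounded and tend to $0$ uniformly on compact sets as $a\to 1^-$, yet the bounded functional induced by the Bloch symbol $g(z)=\log\frac{2}{1-z}$ gives $\overline{g(a)}/\|K_a\|_{A^1}\to c>0$; analogous examples exist for $A^1_\om$ with $\om\in\mathcal{D}$ and on $\BB_n$. Consequently a rank-one operator built from such a functional is compact but fails the stated condition, so for a genuinely abstract bounded $T$ the ``only if'' direction cannot be proved at all when $p\le 1$ --- no amount of work with ``continuous linear functionals on the metric space $(A^p_\om,\rho)$'' (which for $p<1$ is not even locally convex, so Hahn--Banach and complete continuity are unavailable) will close this. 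The repair is to abandon duality in the necessity step and use the pointwise/a.e.\ identification of $h$ described above, which is available precisely because the operators to which the lemma is applied act by composition and multiplication.
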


\subsection{Pseudohyperbolic distance}

Recall that the pseudohyperbolic distance between $a, z\in \BB_n$ is defined by
 $$
 \rho(a,z)=|\varphi_a(z) |,
 $$ where $\varphi_a(z) $ is the M\"obius transformation in $\BB_n$.
Let $\beta(\cdot,\cdot)$ denote the Bergman metric on $\BB_n$, that is,
$$
\beta(z,w)={1\over 2}\log{1+\rho(z,w)  \over  1-\rho(z,w)},\,\, z,w\in\BB_n,
$$ 
and $D(z,r)=\{w\in \BB_n:\beta(z,w)<r\}$ for $r\in (0,1)$ be the Bergman metric ball centered at $z$ with radius $r$.
Let $P_z$ be the orthogonal projection from $\CC^n$ onto the one dimensional subspace $[z]$ generated by $z$ and $Q_z$ be the orthogonal projection from $\CC^n$ onto $\CC^n\ominus [z]$.
For any $z\in \BB_n\backslash \{0\}$ and $r>0$, $D(z,r)$ is an ellipsoid consisting of all points $w\in \BB_n$ that satisfy
\begin{equation}\label{DPQ}
{ |P_z(w)-c|^2 \over  s^2R^2} + { |Q_z(w)|^2 \over sR^2} <1,
\end{equation}
where
$$
R=\tanh(r),\,c= { (1-R^2)z \over 1-R^2|z|^2 },\,s= { 1-|z|^2 \over 1-R^2|z|^2 }.
$$
It is clear that
\begin{equation}\label{wDzr}
1-|z|\approx |1-|w|\approx |1-\langle z,w\rangle|
\end{equation}
and 
$$
|1-\langle z,a\rangle|\approx |1-\langle a,w\rangle|
$$
for any $a\in \BB_n$ and $z,w\in \BB_n$ with $ \beta(z,w)<r$,  where the constants suppressed depend only on $r$ and $n$. 

For $\xi\in \partial\BB_n$ and $\delta>0$, let
$$
S(\xi,\delta):=\{z\in\BB_n|~|1-\langle z,\xi\rangle|<\delta\}
$$
be the Carleson tube at $\xi$. From (2.13) in \cite{cckp}, we see that 
 \begin{equation}\label{De1r}
	D(te_1,r)\subset S\left( e_1, {2(1-t) \over 1-\tanh^{-1} r}\right) 
\end{equation}
for all $r,t\in(0,1)$.

\subsection{Separated sequences and lattices}
A sequence of points $\{a_j\}\subset \BB_n$ is said to be separated if there is a  constant $ \delta>0$ such that the Bergman metric $\beta(a_i,a_j)\ge \delta$ for all $ i$ and $j$ with $i\ne j$. This implies that there exists $r>0$ such that the Bergman metric balls $D(a_j,r)$ are pairwise disjoint.
\begin{lem}\label{cover}\cite[Theorem 2.23]{z}
	There exists a positive integer $N$ such that, for any $0<r\le1$ we can find a sequence $\{a_k\}\in \BB_n$ with the following properties:
	\begin{enumerate}
		\item[(i)] $\BB_n=\cup_kD(a_k,r)$;
		\item[(ii)] The sets $D(a_k,{r\over 4})$ are mutually disjoint;
		\item[(iii)] Each point $z\in\BB_n$ belongs to at most $N$ of the sets $D(a_k,4r)$.
	\end{enumerate}	
\end{lem}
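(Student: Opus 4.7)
The plan is to construct the sequence $\{a_k\}$ as a maximal $r/2$-separated set in the Bergman metric and then verify the three properties directly. More precisely, since $\BB_n$ is separable, a straightforward inductive construction (or Zorn's lemma) produces a sequence $\{a_k\}\subset \BB_n$ that is maximal with respect to the property $\beta(a_i,a_j)\ge r/2$ for all $i\ne j$. This maximality is the engine driving everything else.

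Property (i) follows immediately from maximality: for any $z\in\BB_n$, either $z$ already equals some $a_k$, or adjoining $z$ to $\{a_k\}$ would preserve $r/2$-separation, contradicting maximality; hence $\beta(z,a_k)<r/2\le r$ for some $k$. Property (ii) is a standard triangle-inequality argument: if $w\in D(a_i,r/4)\cap D(a_j,r/4)$ with $i\ne j$, then $\beta(a_i,a_j)\le \beta(a_i,w)+\beta(w,a_j)<r/2$, contradicting separation.

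For property (iii), I would use a volume-comparison argument. Fix $z\in\BB_n$ and let $I(z)=\{k:z\in D(a_k,4r)\}$. For every $k\in I(z)$ we have $a_k\in D(z,4r)$, and hence by (ii) and the triangle inequality the disjoint balls $D(a_k,r/4)$ all sit inside $D(z,5r)$. Using the explicit description of $D(\cdot,r)$ as an ellipsoid given in \eqref{DPQ} together with \eqref{wDzr}, the Euclidean volume of $D(w,\tau)$ is comparable to $(1-|w|^2)^{n+1}$ with constants depending only on $\tau$ and $n$. Combined with the fact that $1-|a_k|^2\approx 1-|z|^2$ whenever $\beta(z,a_k)<4r$, this gives
$$
\#I(z)\cdot c_1 (1-|z|^2)^{n+1}\le \sum_{k\in I(z)} V\bigl(D(a_k,r/4)\bigr) \le V\bigl(D(z,5r)\bigr)\le c_2 (1-|z|^2)^{n+1},
$$
so $\#I(z)\le N$ with $N=N(r,n)$.

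The main obstacle is the volume-comparison step in (iii): one needs to know that $V(D(w,\tau))/V(D(z,\tau))$ is bounded above and below by constants uniformly in $z,w$ whenever $\beta(z,w)$ is bounded. This is exactly where one cashes in the precise shape of Bergman balls from \eqref{DPQ}, whose semi-axes are of order $1-|z|^2$ in the radial direction and $(1-|z|^2)^{1/2}$ in the tangential directions. A cleaner alternative, which I would actually prefer, is to pass to the M\"obius-invariant measure $d\lambda(z)=dV(z)/(1-|z|^2)^{n+1}$; since $\lambda(D(z,\tau))$ depends only on $\tau$, the disjointness from (ii) together with the inclusion $D(a_k,r/4)\subset D(z,5r)$ instantly yields $\#I(z)\cdot \lambda(D(0,r/4))\le \lambda(D(0,5r))$, and one reads off the constant $N$.
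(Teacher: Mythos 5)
The paper offers no proof of this lemma; it is quoted verbatim from Zhu \cite[Theorem 2.23]{z}, and your argument is essentially the standard proof given there: take a maximal $r/2$-separated set, get (i) from maximality, (ii) from the triangle inequality, and (iii) from disjointness plus a volume count, with the M\"obius-invariant measure $d\lambda(z)=dV(z)/(1-|z|^2)^{n+1}$ being the clean way to run the count since $\lambda(D(z,\tau))$ depends only on $\tau$. The one point you must tighten is the quantifier order in the statement: the lemma asserts a \emph{single} $N$ valid for every $r\in(0,1]$, whereas your bound $\#I(z)\le \lambda(D(0,5r))/\lambda(D(0,r/4))$ is written as $N=N(r,n)$. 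To finish, observe that this ratio is bounded uniformly on $(0,1]$: for small $\tau$ the ball $D(0,\tau)$ is the Euclidean ball of radius $\tanh\tau\approx\tau$ and $\lambda$ is comparable to Lebesgue measure near the origin, so $\lambda(D(0,\tau))\approx \tau^{2n}$ and the ratio tends to $20^{2n}$ as $r\to 0^+$; being continuous on $(0,1]$ with a finite limit at $0$, it is bounded, and one takes $N$ to be (the integer part of) its supremum. With that one extra observation your proof is complete and matches the cited source; note also that in this paper the lemma is only ever invoked for fixed $r$, so the $r$-uniformity of $N$, while part of the statement, is never actually exploited downstream.
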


Any sequence $\{a_k\}$ satisfying the above conditions is called an $r$-lattice in the Bergman metric. Clearly, any $r$-lattice sequence is separated.

\subsection{Function property}

	From  [\citen{dl2}, Lemma 2.2], we know that the weight $\omega$ has the following properties.

\begin{lem}\label{lem1}
	Let $\omega\in \mathcal{D}$. For $a,z\in\BB_n$, then the following statements hold:
	\begin{enumerate}
		\item[(i)]$\omega(S_a)\approx (1-|a|)^n\int_{|a|}^1\omega(s)ds$, where $S_a$ is the Carleson block;
		\item[(ii)] $\hat{\omega}(a)\approx\hat{\omega}(z)$, if $1-|a|\approx 1-|z|$;
		\item[(iii)] There are $0<\alpha:=\alpha(\om)\le \beta:=\beta(\om)$ and $C=C(\om)>1$ such that
		$$
		{1\over C}\left({1-s \over 1-t} \right)^\lambda\le { \hat{\om}(s) \over \hat{\om}(t) }\le C \left({1-s \over 1-t} \right)^\beta,0\le s\le t<1.
		$$
	\end{enumerate}
\end{lem}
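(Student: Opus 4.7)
The plan is to derive all three parts from one-dimensional analysis of the decreasing tail $\hat{\omega}$, paired with standard polar-coordinate estimates on the unit ball.

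I would begin with (iii), which is purely one-dimensional. Reparametrize by $g(u):=\hat{\omega}(1-u)$, which is nonnegative and increasing in $u\in(0,1]$. The doubling hypothesis rewrites as $g(u)\le C_1\,g(u/2)$ and the reverse doubling hypothesis as $g(u)\ge C_2\,g(u/K)$ with $C_1,C_2>1$. Iterating $k$ times and choosing $k$ so that $2^{k-1}\le u/v<2^k$ (respectively $K^{k-1}\le u/v<K^k$) produces the two-sided power bound with exponents $\beta=\log_2 C_1$ and $\alpha=\log_K C_2$; positivity of both exponents is automatic from $C_1,C_2>1$, and the constraint $\alpha\le\beta$ is forced by the fact that the two bounds must be compatible.

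Part (ii) then falls out immediately from (iii): if $1-|a|\approx 1-|z|$, the ratio $(1-|a|)/(1-|z|)$ is bounded above and below by constants, so applying (iii) in both directions yields $\hat{\omega}(|a|)\approx\hat{\omega}(|z|)$. Alternatively, a direct iteration of $\hat{\omega}(r)\le C\hat{\omega}((r+1)/2)$ a bounded number of times, followed by monotonicity of $\hat{\omega}$, does the job without invoking (iii).

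For part (i), I would pass to polar coordinates. With $\xi=a/|a|$ and $S_a=S(\xi,1-|a|)$, Fubini yields
$$\omega(S_a)=2n\int_0^1 r^{2n-1}\omega(r)\,\sigma\bigl(\{\zeta\in\partial\BB_n:|1-r\langle\zeta,\xi\rangle|<1-|a|\}\bigr)\,dr,$$
where $\sigma$ is the normalized surface measure on $\partial\BB_n$. A standard nonisotropic cap estimate shows that the inner surface measure is comparable to $(1-|a|)^n$ uniformly for $r$ in an interval around $|a|$, and vanishes for $r\le 2|a|-1$, reducing the problem to comparing $\int_{|a|}^1 r^{2n-1}\omega(r)\,dr$ with $(1-|a|)^n\int_{|a|}^1\omega(s)\,ds$. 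Since $r^{2n-1}\approx 1$ on the relevant range, these are comparable. The main obstacle is exactly this geometric bookkeeping---verifying that the nonisotropic Carleson tube contributes the power $(1-|a|)^n$ from its shape on the sphere---which relies on the anisotropic structure $|1-\langle z,w\rangle|\approx|1-\langle P_\xi z,w\rangle|+|Q_\xi z|^2$ near the boundary; this is where the unit-ball case is genuinely more delicate than the unit-disk analogue, while (ii) and (iii) transfer almost verbatim from the one-dimensional theory.
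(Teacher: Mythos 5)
The paper itself contains no proof of this lemma: it is imported verbatim from Lemma 2.2 of Du--Li \cite{dl2}, so there is no internal argument to compare yours against. Taken on its own, your reconstruction is correct and follows the standard route. For (iii), the substitution $g(u)=\hat{\om}(1-u)$ does turn doubling into $g(u)\le C_1 g(u/2)$ and reverse doubling into $g(u)\ge C_2 g(u/K)$, and iteration gives the two-sided power bounds with $\beta=\log_2 C_1$ and $\alpha=\log_K C_2$ (the exponent $\lambda$ in the paper's display is a typo for $\alpha$); the compatibility argument forcing $\alpha\le\beta$ is fine. Part (ii) is indeed an immediate consequence.

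Two points in part (i) deserve tightening. First, the reduction as you state it --- ``comparing $\int_{|a|}^1 r^{2n-1}\omega(r)\,dr$ with $(1-|a|)^n\int_{|a|}^1\omega(s)\,ds$'' --- is off by the factor $(1-|a|)^n$: once the cap estimate is inserted, what you actually compare is $(1-|a|)^n\int r^{2n-1}\omega(r)\,dr$ over the relevant range against $(1-|a|)^n\hat{\om}(|a|)$; as literally written the two quantities are not comparable. Second, and more substantively, the lower bound is not purely geometric. The slice $S_a\cap r\SS_n$ has $\sigma$-measure $\gtrsim(1-|a|)^n$ only for $r$ bounded away from $|a|$ on the scale $1-|a|$ (say $r\ge(1+|a|)/2$); for $r$ just above $|a|$ it is only of size $(r-|a|)^n$. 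The polar integral therefore yields $\omega(S_a)\gtrsim(1-|a|)^n\hat{\om}\bigl(\tfrac{1+|a|}{2}\bigr)$, and one must invoke $\om\in\hat{\mathcal{D}}$ once more to replace $\hat{\om}\bigl(\tfrac{1+|a|}{2}\bigr)$ by $\hat{\om}(|a|)$. This is not cosmetic: for a general radial weight concentrated on a thin shell just outside $|a|$ the lower half of (i) fails, so the appeal to doubling should be made explicit rather than absorbed into ``uniformly for $r$ in an interval around $|a|$''. With those two repairs the argument is complete.
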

For $\omega\in \mathcal{D}$,   the twisted weight $W$  is defined as follows.
  $$W(z)=W_\om(r):={ \hat{\om}(z) \over 1-|z| },\,\,z\in \BB_n.$$

The following result may have appeared in some literature, but for the benefit of the reader, we provide a brief proof here.

\begin{lem}\label{ApwW}
	Let $0<p<\infty, \om\in \mathcal{D}$. Then $\|f\|_{A^p_\om}\approx \|f\|_{A^p_{W}}$ for all $f\in H(\BB_n)$.
\end{lem}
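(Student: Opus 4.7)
The plan is to reduce the statement to a one-variable comparison via polar coordinates and then derive it from the pointwise equivalence $\hat{\om}\approx\hat{W}$, where $\hat{W}(r):=\int_r^1 W(s)\,ds$.

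Writing $z=r\xi$ with $\xi\in\partial\BB_n$, set $M(r)=\int_{\partial\BB_n}|f(r\xi)|^p\,d\sigma(\xi)$, which is non-decreasing in $r$ by plurisubharmonicity of $|f|^p$. Polar integration gives
$$\|f\|^p_{A^p_\om}\approx\int_0^1 M(r)\om(r)\,dr,\qquad \|f\|^p_{A^p_W}\approx\int_0^1 M(r)W(r)\,dr,$$
the factor $r^{2n-1}$ being harmless. Writing $M(r)=M(0)+\int_0^r dM(t)$ and swapping the order of integration (Tonelli is legal since everything is non-negative) yields
$$\int_0^1 M(r)\om(r)\,dr = M(0)\hat{\om}(0)+\int_0^1 \hat{\om}(t)\,dM(t),$$
together with the identical identity obtained by replacing $\om,\hat{\om}$ by $W,\hat{W}$. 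Consequently the whole lemma reduces to proving $\hat{\om}(r)\approx\hat{W}(r)$ for every $r\in[0,1)$: this pointwise equivalence integrates against the positive measure $dM$ and the constant $M(0)$ to produce the desired norm equivalence.

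To prove $\hat{\om}\approx\hat{W}$, I would invoke Lemma~\ref{lem1}(iii), which (relabelling $\lambda=\alpha$) furnishes exponents $0<\alpha\le\beta$ and a constant $C>1$ with
$$\frac{1}{C}\left(\frac{1-s}{1-r}\right)^{\beta}\hat{\om}(r)\le\hat{\om}(s)\le C\left(\frac{1-s}{1-r}\right)^{\alpha}\hat{\om}(r),\qquad 0\le r\le s<1.$$
Plugging these bounds into $\hat{W}(r)=\int_r^1\hat{\om}(s)/(1-s)\,ds$ reduces the estimate to the elementary power integrals $\int_r^1(1-s)^{\gamma-1}\,ds=(1-r)^{\gamma}/\gamma$ for $\gamma\in\{\alpha,\beta\}$. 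The prefactors $(1-r)^{-\gamma}$ cancel, yielding $\hat{\om}(r)/(C\beta)\le\hat{W}(r)\le(C/\alpha)\hat{\om}(r)$, as needed.

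The only nontrivial ingredient is the pointwise bound $\hat{W}\approx\hat{\om}$, and once Lemma~\ref{lem1}(iii) is available this is essentially a one-line computation; the remainder of the argument is just polar coordinates and a Tonelli rearrangement, so I do not expect any real obstacle.
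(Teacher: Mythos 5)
Your argument is correct, but it reaches the norm equivalence by a different route than the paper. Both proofs ultimately rest on the same pointwise estimate $\widehat{W}(r)\approx\hat{\om}(r)$, and you prove it exactly as the paper does (via the two-sided power bounds on $\hat{\om}$ from Lemma~\ref{lem1}(iii), which package the content of Lemmas A(ii) and B(iii) of \cite{prs2} that the paper cites). The difference is in how that pointwise fact is upgraded: the paper first verifies $W\in \hat{\mathcal{D}}$ through a dyadic argument at the points $r_k=1-2^{-k}$ and then invokes Theorem 1 of \cite{dlls2}, which states that two radial doubling weights with comparable Carleson-block masses induce equivalent $A^p$ norms. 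You instead exploit the monotonicity of the spherical means $M(r)=\int_{\partial\BB_n}|f(r\xi)|^p\,d\sigma(\xi)$ (valid since $|f|^p$ is plurisubharmonic) together with a Tonelli rearrangement, which converts the comparison of the tails $\hat{\om}$ and $\widehat{W}$ directly into the comparison of $\int_0^1 M\om\,dr$ and $\int_0^1 MW\,dr$; this is more elementary and self-contained, avoiding both the verification that $W$ is doubling and the external citation. Two small points you gloss over but which do work out: discarding the Jacobian factor $r^{2n-1}$ requires the doubling hypothesis (one needs $\hat{\om}(0)\lesssim\hat{\om}(1/2)$, plus monotonicity of $M$, to control the contribution from $r<1/2$; the same remark applies to $W$), and the identity $\int_0^1 M(r)\om(r)\,dr=M(0)\hat{\om}(0)+\int_0^1\hat{\om}(t)\,dM(t)$ should be read as a Lebesgue--Stieltjes/Tonelli computation with the positive measure $dM$, which is legitimate since all integrands are nonnegative. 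With those caveats made explicit, your proof is a valid and arguably cleaner alternative.
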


\begin{proof}
	Let $r_k=1-2^{-k}$ for all $k\in \NN \cup \{0\}$. 
	 By $\om\in \mathcal{D}$,
there is $C=C(\om)\ge 1$ such that
	\begin{align*}
	\widehat{W}(r_k)= &\int_{r_k}^{r_{k+1}} { \hat{\om}(s) \over 1-s }ds +\int_{r_{k+1}}^1  W(s)ds \\
	\le &\hat{\om}(r_k) \int_{r_k}^{r_{k+1}} { 1 \over 1-s }ds +\int_{r_{k+1}}^1  W(s)ds\\
	\le &C \log2 \cdot \hat{\om}(r_{k+1})  +\int_{r_{k+1}}^1  W(s)ds \le C_1 \widehat{W}(r_{k+1})
		\end{align*}
		for some $C_1=C_1(\om)\ge 1$.
	Then similar to the proof of Lemma 2.1(${\rm i}\Rightarrow {\rm iii}$) in \cite{p}, there are $\gamma:=\gamma(\om)>0$ and $C:=C(\om)>0$ such that 
	$$
	\int_0^t\left({1-t\over 1-s} \right) ^\gamma W(s)ds\le C\widehat{W}(t),\,\,0\le t<1,
	$$
	which gives that $W\in \hat{\mathcal{D}}$ by Lemma A(iii) in \cite{prs2}. Applying Theorem 1 in \cite{dlls2}, $\|f\|_{A^p_\om}\approx \|f\|_{A^p_{W}}$ for all $f\in H(\BB_n)$ if $W(S_a)\approx \om(S_a)$ for $a\in \BB_n$.
	Since $W$ and $\om$ are radial weights, we get the desired result if $\widehat{W}(r)\approx\hat{\om}(r) $ for $0\le r<1$.
	Using Lemma B(iii) in \cite{prs2}, for $0\le r<1$, there is a $C:=C(\om)>0$ such that
	$$
	\widehat{W}(r)=\int_r^1{ \hat{\om}(s) \over 1-s }ds\le C\hat{\om}(r)
	$$
	and by Lemma A(ii) in \cite{prs2}, there are constants $ \beta:=\beta(\om)>0$ and $C:=C(\om)>0$ such that
	$$
	\widehat{W}(r)=\int_r^1{ \hat{\om}(s) \over 1-s }ds
	\ge C\int_r^1 {\hat{\om}(r)  \over 1-s}\left( 1-s \over 1-r \right) ^\beta ds\ge C \hat{\om}(r).
	$$
	Therefore, $\widehat{W}(r)\approx\hat{\om}(r) $ for $0\le r<1$. The proof is complete.
\end{proof}

 The following Lemma deals with the integrability of doubling weights.
 
 \begin{lem}\label{whatw}
 	If $ \omega\in{\mathcal{D}}$, then there is $\lambda_0=\lambda_0(\omega)\ge 0$ such that$$
 	\int_{\BB_n} {\omega(z)  \over |1-\langle z,a\rangle|^{\lambda n+n} }dV(z)\approx { \hat{\om}(a) \over  (1-|a|)^{\lambda n}},a\in \BB_n,$$for all $\lambda>\lambda_0$.
 \end{lem}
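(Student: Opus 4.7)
The plan is to reduce the problem to a one-dimensional radial integral via polar coordinates, control the angular part with the classical sphere-integral estimate, and then exploit the doubling properties from Lemma \ref{lem1} on what remains. Since $\om$ is radial, writing $z=r\zeta$ with $r\in[0,1)$, $\zeta\in\partial\BB_n$ and applying the standard sphere estimate $\int_{\partial\BB_n} d\sigma(\zeta)/|1-\langle\zeta,ra\rangle|^{n+\lambda n}\approx (1-r|a|)^{-\lambda n}$ (valid for every $\lambda>0$) reduces the matter to showing
\begin{equation*}
J(\rho):=\int_0^1 \frac{\om(r)}{(1-r\rho)^{\lambda n}}\,dr \approx \frac{\hat\om(\rho)}{(1-\rho)^{\lambda n}},\qquad \rho=|a|\in[0,1).
\end{equation*}
From $1-r\rho=(1-r)+r(1-\rho)$ one obtains the elementary two-sided comparison $1-r\rho\approx \max(1-r,1-\rho)$, which splits the integrand naturally at $r=\rho$.

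The lower bound on $J(\rho)$ is immediate: for $r\in[\rho,1]$ we have $1-r\rho\leq 1-\rho^2\leq 2(1-\rho)$, so restricting the integral to $[\rho,1]$ yields $J(\rho)\gtrsim \hat\om(\rho)/(1-\rho)^{\lambda n}$. For the upper bound the splitting gives
\begin{equation*}
J(\rho)\lesssim \int_0^\rho \frac{\om(r)}{(1-r)^{\lambda n}}\,dr+\frac{\hat\om(\rho)}{(1-\rho)^{\lambda n}},
\end{equation*}
so everything reduces to absorbing the first summand into the second.

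To do this I would fix the dyadic partition $r_k=1-2^{-k}$ and pick $N$ with $r_N\leq \rho<r_{N+1}$ (so $1-\rho\approx 2^{-N}$), and bound
\begin{equation*}
\int_0^{r_N}\frac{\om(r)}{(1-r)^{\lambda n}}\,dr \lesssim \sum_{k=0}^{N-1}2^{k\lambda n}\bigl(\hat\om(r_k)-\hat\om(r_{k+1})\bigr)\leq \sum_{k=0}^{N-1}2^{k\lambda n}\hat\om(r_k).
\end{equation*}
Applying the upper bound in Lemma \ref{lem1}(iii) with $s=r_k,\,t=r_N$ gives $\hat\om(r_k)\lesssim 2^{(N-k)\beta}\hat\om(r_N)$, turning the right-hand side into the geometric series $2^{N\beta}\hat\om(r_N)\sum_{k=0}^{N-1}2^{k(\lambda n-\beta)}$. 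Setting $\lambda_0:=\beta/n$, for every $\lambda>\lambda_0$ this series is dominated by its largest term, yielding $\lesssim 2^{N\lambda n}\hat\om(r_N)\approx \hat\om(\rho)/(1-\rho)^{\lambda n}$, as required. The main obstacle is precisely this final summation: the estimate only closes once $\lambda$ exceeds the upper growth exponent $\beta=\beta(\om)$ of $\hat\om$ furnished by Lemma \ref{lem1}(iii), which is what forces the quantitative restriction $\lambda>\lambda_0$ in the statement.
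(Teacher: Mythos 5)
Your proof is correct and follows essentially the same route as the paper's: polar coordinates plus Zhu's sphere integral estimate reduce the problem to the radial integral $\int_0^1\omega(r)(1-r|a|)^{-\lambda n}\,dr$, the lower bound comes from restricting to $[|a|,1]$, and the upper bound from splitting at $r=|a|$. The only difference is that where the paper absorbs $\int_0^{|a|}\omega(r)(1-r)^{-\lambda n}\,dr$ into $\hat{\om}(a)(1-|a|)^{-\lambda n}$ by citing Lemma 2.1(iii) of \cite{p}, you prove that inequality directly via a dyadic decomposition combined with Lemma \ref{lem1}(iii), which has the minor benefit of making the admissible threshold $\lambda_0=\beta(\om)/n$ explicit.
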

 
 \begin{proof}
 	Using Theorem 1.12 in \cite{z} and Lemma 2.1 (iii) in \cite{p}, we have
 	\begin{align*}
 		\int_{\BB_n} {\omega(z)  \over |1-\langle z,a\rangle|^{\lambda n+n} }dV(z)=&2n\int_0^1	\omega(r)r^{2n-1}\int_{\SS_n}{1\over |1-\langle r\xi,a\rangle|^{\lambda n+n}}d\sigma(\xi)dr\\
 		\approx& \int_0^1{ \omega(r)r^{2n-1} \over (1-|a|r)^{\lambda n} }dr\\
 		\le& \int_0^{|a|}{ \omega(r) \over (1-r)^{\lambda n} }dr+\int_{|a|}^1{ \omega(r) \over (1-|a|)^{\lambda n} }dr\\
 		\lesssim& { \int_{|a|}^1\omega(r)dr \over  (1-|a|)^{\lambda n}}={ \hat{\om}(a) \over  (1-|a|)^{\lambda n}}
 	\end{align*}
 	and
 	\begin{align*}
 		\int_{\BB_n} {\omega(z)  \over |1-\langle z,a\rangle|^{\lambda n+n} }dV(z)=&2n\int_0^1	\omega(r)r^{2n-1}\int_{\SS_n}{1\over |1-\langle r\xi,a\rangle|^{\lambda n+n}}d\sigma(\xi)dr\\
 		\gtrsim&\int_{|a|}^1{ \omega(r)r^{2n-1} \over (1-|a|)^{\lambda n} }dr\approx { \int_{|a|}^1\omega(r)dr \over  (1-|a|)^{\lambda n}}={ \hat{\om}(a) \over  (1-|a|)^{\lambda n}}.
 	\end{align*}
 	The proof is complete.
 \end{proof}

To study the boundedness and compactness of $uC_\varphi-vC_\psi:A^p_\om\to L^q_\mu$, we choose	$ K_a(z) = {1 \over 1-\langle a,z\rangle } ,z,a\in \BB_n$ as the test function in  the case $p\le q$.
From the above lemma, we see that 
 \begin{equation}\label{Kas}
\|K_a^s\|_{A^p_\om}^p \approx {  \hat{\om}(a) \over  (1-|a|)^ {ps-n } }
\end{equation}
for any $s>{ \lambda n+n \over  p}$.
In the case $p>q$, we need to consider another test function, given by the following Lemma \ref{Flambda}.
Before presenting  Lemma \ref{Flambda}, we first introduce a necessary lemma, whose proof is similar to that of Lemma 2, and we only  give the key steps.

\begin{lem}\label{whatwt}
	If $ \omega\in {\mathcal{D}}$, then there is $\beta=\beta(\om)>0$ and $\lambda_0=\lambda_0(\omega)\ge 0$ such that$$
	\int_{\BB_n} {\omega(z)  \over(1-|z|^2)^t |1-\langle z,a\rangle|^{\lambda n+n} }dV(z)\lesssim { \hat{\om}(a) \over  (1-|a|)^{\lambda n+t}},a\in \BB_n,$$for all $t>\beta,\lambda>\lambda_0$.
\end{lem}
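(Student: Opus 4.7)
The plan is to follow the proof of Lemma \ref{whatw} step by step, handling the new singular factor $(1-|z|^2)^{-t}$ through the doubling structure of $\om$. First I would apply Theorem 1.12 of \cite{z} together with the spherical integral estimate used in the proof of Lemma \ref{whatw} to reduce the assertion to the one-dimensional inequality
$$\int_0^1 \frac{\om(r)\,r^{2n-1}}{(1-r^2)^t(1-|a|r)^{\lambda n}}\,dr \lesssim \frac{\hat{\om}(a)}{(1-|a|)^{\lambda n+t}}.$$
I would then split the integral at $r=|a|$: on $[0,|a|]$ the bound $1-|a|r\ge 1-r$ replaces $(1-|a|r)^{-\lambda n}$ by $(1-r)^{-\lambda n}$, giving a piece $J_1\lesssim \int_0^{|a|}\om(r)(1-r)^{-(\lambda n+t)}\,dr$; on $[|a|,1]$ we have $1-|a|r\approx 1-|a|$, so $(1-|a|r)^{-\lambda n}$ can be pulled out, leaving $J_2\approx (1-|a|)^{-\lambda n}\int_{|a|}^1\om(r)(1-r)^{-t}\,dr$.

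To control each of these pieces I would run a dyadic decomposition with $r_k=1-2^{-k}$, using that $\om\in\mathcal D$ implies $\int_{r_k}^{r_{k+1}}\om(r)\,dr\approx \hat{\om}(r_k)$ (the upper bound coming from $\hat{\mathcal D}$, the lower from $\breve{\mathcal D}$). Fixing $N$ with $1-|a|\approx 2^{-N}$, Lemma \ref{lem1}(iii) then controls $\hat{\om}(r_k)/\hat{\om}(|a|)$ by a power of $2^{N-k}$, with the upper doubling exponent appearing when $k\le N$ and the lower one when $k\ge N$. The resulting geometric series for $J_1$ and $J_2$ converge in the admissible ranges of $(\lambda,t)$ dictated by these two exponents, and each is dominated by its $k=N$ term. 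This produces the common bound $\hat{\om}(a)(1-|a|)^{-(\lambda n+t)}$ for both $J_1$ and $J_2$, the choices of $\lambda_0$ and $\beta$ being exactly those that ensure convergence of the two dyadic sums.

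The main obstacle is the tail piece $J_2$: one cannot simply bound $(1-r)^{-t}$ by a constant, since it blows up at the boundary. The resolution is the dyadic argument just sketched, which pits the blow-up of $(1-r)^{-t}$ against the decay of $\hat{\om}(r)$ at the boundary encoded in Lemma \ref{lem1}(iii). It is precisely this balance that forces the quantitative condition on $t$ in terms of the doubling exponent of $\om$. Once $J_2$ is handled, the treatment of $J_1$ is essentially identical to the argument in Lemma \ref{whatw}, but with $\lambda n$ replaced by $\lambda n+t$.
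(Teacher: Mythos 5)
Your reduction to the one-dimensional integral and your treatment of the piece $J_1=\int_0^{|a|}\om(r)(1-r)^{-(\lambda n+t)}\,dr$ are sound and follow the same route as the paper (split at $r=|a|$, use $1-|a|r\ge 1-r$, and control the dyadic blocks via the upper exponent $\beta(\om)$ of Lemma \ref{lem1}(iii); this is where the hypothesis $\lambda n+t>\beta$ legitimately enters). The gap is in $J_2$. If you carry out your own dyadic scheme honestly, with $r_k=1-2^{-k}(1-|a|)$ you get
$$\int_{r_k}^{r_{k+1}}\frac{\om(r)}{(1-r)^{t}}\,dr\le \frac{2^{(k+1)t}}{(1-|a|)^{t}}\,\hat{\om}(r_k),$$
and the only boundary decay available for a general $\om\in\mathcal{D}$ is $\hat{\om}(r_k)\lesssim \hat{\om}(a)\,2^{-k\alpha}$, where $\alpha=\alpha(\om)>0$ is the (possibly small) lower exponent in Lemma \ref{lem1}(iii). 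The resulting series $\sum_k 2^{k(t-\alpha)}$ converges only when $t<\alpha(\om)$, whereas the lemma demands the bound for all $t>\beta(\om)\ge\alpha(\om)$. So the ``balance'' you invoke tips the wrong way precisely in the range you need: the geometric series for $J_2$ diverges, and this is not a defect of the method but of the integral itself. For $\om\equiv 1$ (which lies in $\mathcal{D}$, with $\hat{\om}(r)=1-r$) and any $t\ge 1$ one has $\int_{|a|}^{1}(1-r)^{-t}\,dr=+\infty$, while the right-hand side of the lemma is finite.

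In other words, the assertion ``the resulting geometric series for $J_1$ and $J_2$ converge in the admissible ranges of $(\lambda,t)$'' is exactly the step that fails, and no choice of lower thresholds $\lambda_0,\beta$ can repair it, since the tail forces an \emph{upper} bound $t<\alpha(\om)$. For comparison, the paper disposes of the same tail via
$$\int_{|a|}^1\frac{\om(r)}{(1-r)^{\lambda n+t}}\,dr\le \int_{|a|}^1\frac{\om(r)}{(1-|a|)^{\lambda n+t}}\,\frac{\hat{\om}(a)}{\hat{\om}(r)}\,dr\le \frac{\hat{\om}(a)}{(1-|a|)^{\lambda n+t}},$$
whose first inequality uses Lemma \ref{lem1}(iii) in the direction that fails once $\lambda n+t>\alpha(\om)$, and whose second would require $\int_{|a|}^1\om(r)/\hat{\om}(r)\,dr\le 1$, which is false because that integral diverges. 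So your decomposition has in fact isolated a genuine problem with the estimate in the regime in which it is stated and later applied; a correct version needs either the restriction $0<\lambda n+t<\alpha(\om)$ or a reformulation that removes the factor $(1-|z|^2)^{t}$ from the denominator.
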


\begin{proof}
	Let $\beta=\beta(\om)$ be the constant in Lemma \ref{lem1}(iii).
Applying Theorem 1.12 in \cite{z} and Lemma 2.1 (iii) in \cite{p}, we have
	\begin{align*}
		\int_{\BB_n} {\omega(z)  \over (1-|z|^2)^t|1-\langle z,a\rangle|^{\lambda n+n} }dV(z)
		\approx& \int_0^1{ \omega(r)r^{2n-1} \over (1-r)^t (1-|a|r)^{\lambda n} }dr\\
		\le& \int_0^{|a|}{ \omega(r) \over (1-r)^{\lambda n+t} }dr+\int_{|a|}^1{ \omega(r) \over (1-r)^{\lambda n+t} }dr.
	\end{align*}
Note that	 Lemma \ref{lem1}(iii) gives that
\begin{align*}	
	\int_{|a|}^1{ \omega(r) \over (1-r)^{\lambda n+t} }dr\le & \int_{|a|}^1 {\om(r) \over  (1-|a|)^{\lambda n+t} } { \hat{\om}(a) \over  \hat{\om}(r)} dr
	\le { \hat{\om}(a) \over  (1-|a|)^{\lambda n+t}},
\end{align*}
which deduces the desired result.
\end{proof}

The unit disk case of the following lemma is given by Pel\'aez and R\"atty\"a in [\citen{prs}, Theorem 1].  Here, we prove the high-dimensional case by using a different method from theirs.

\begin{lem}\label{Flambda}
Let $0<p<\infty, \om\in \mathcal{D}$ and $\{a_k\}$ be a separated sequence in $\BB_n$. If $ t>n+{ \beta(\om)+\lambda(\om)n+n \over  p}$ for some $ \beta(\om),\lambda(\om)>0$ and $\lambda=\{\lambda_k\}\in l^p$, then the function $F$ defined by 
$$
F(z):=\sum_{k=1}^\infty \lambda_k  {(1-|a_k|^2)^{t-{n\over p}}  \over  \hat{\om}(a_k)^{1\over p} (1-\langle z,a_k\rangle )^t}
$$
belongs to $ A^p_\om(\BB_n)$ and there is a constant $C:=C(t,p,\om)>0$ such that
$$
\|F\|_{A^p_\om}\le C\sum_{k=1}^n |\lambda_k|^p=C\|\lambda\|_{l^p}.
$$
\end{lem}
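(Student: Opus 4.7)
For the case $0<p\le 1$, I would apply the subadditivity $|\sum_k b_k|^p \le \sum_k |b_k|^p$ to obtain
$|F(z)|^p \le \sum_k |\lambda_k|^p (1-|a_k|^2)^{pt-n}/[\hat{\omega}(a_k)|1-\langle z,a_k\rangle|^{pt}]$,
then integrate against $\omega\, dV$ term by term and invoke Lemma \ref{whatw} (valid since the hypothesis on $t$ comfortably forces $pt>(\lambda(\omega)+1)n$); the resulting $\hat{\omega}(a_k)/(1-|a_k|)^{pt-n}$ cancels exactly with the corresponding prefactor, yielding $\|F\|_{A^p_\omega}^p \lesssim \sum_k |\lambda_k|^p$ directly.

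For $p>1$, my plan is to apply H\"older's inequality after introducing an auxiliary quantity $Y_k(z) := (1-|a_k|^2)^A/|1-\langle z,a_k\rangle|^B$, with parameters $A,B>0$ to be chosen. Writing each summand as $\lambda_k K(z,a_k) = \bigl[\lambda_k K\, Y_k^{-(p-1)/p}\bigr] \cdot Y_k^{(p-1)/p}$ and invoking H\"older with exponents $p,p'$ yields $|F(z)|^p \le \bigl(\sum_k |\lambda_k|^p u_k(z)\bigr)\bigl(\sum_k Y_k(z)\bigr)^{p-1}$, where $u_k(z) = (1-|a_k|^2)^{pt-n-A(p-1)}/[\hat{\omega}(a_k)|1-\langle z,a_k\rangle|^{pt-B(p-1)}]$. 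To bound the second factor I would use the separatedness of $\{a_k\}$: replace $(1-|a_k|^2)^A$ by an integral over the disjoint Bergman balls $D(a_k,\delta)$ of volume $\approx (1-|a_k|^2)^{n+1}$, and then apply Theorem 1.12 of Zhu to conclude $\sum_k Y_k(z)\lesssim (1-|z|^2)^{A-B}$ when $B>A>n$. Plugging this in, integrating the first H\"older factor against $\omega\,(1-|z|^2)^{(A-B)(p-1)}\,dV$ via Lemma \ref{whatwt} (with $(B-A)(p-1)$ playing the role of $t$ and $pt-B(p-1)$ the role of $\lambda n+n$), and observing a final cancellation between prefactors produces $\|F\|_{A^p_\omega}^p \lesssim \sum_k |\lambda_k|^p$.

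The main obstacle is the simultaneous compatibility of the three exponent constraints in the $p>1$ case: $n<A<B$ (for the second H\"older sum), $(B-A)(p-1)>\beta(\omega)$ (for the $(1-|z|^2)$-exponent hypothesis in Lemma \ref{whatwt}), and $pt-B(p-1)>(\lambda(\omega)+1)n$ (for the $|1-\langle z,a_k\rangle|$-exponent hypothesis of the same lemma). A direct check shows the assumption $t>n+[\beta(\omega)+\lambda(\omega)n+n]/p$ is precisely what permits such a choice: one may take $A$ just above $n$ and $B$ just above $A+\beta(\omega)/(p-1)$, and then the upper bound on $B$ forced by the third condition is comfortably accommodated by the $t$-hypothesis. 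Once the split is correctly calibrated, the remainder of the argument is essentially mechanical.
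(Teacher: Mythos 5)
Your argument is correct, and for $p>1$ it takes a genuinely different route from the paper. For $0<p\le 1$ the two proofs coincide: the paper applies $p$-subadditivity of the quasi-norm to the normalized kernels $g_k$ and cites (\ref{Kas}), which is exactly your term-by-term integration via Lemma \ref{whatw}. For $p>1$, however, the paper does not use your H\"older split: it dominates $|F|$ pointwise by $\Lambda f$, where $\Lambda$ is the integral operator with kernel $(1-|w|^2)^{t-n-1}|1-\langle z,w\rangle|^{-t}$ and $f$ is a sum of normalized characteristic functions of the disjoint Bergman balls $D(a_k,r)$; it then proves $\Lambda$ is bounded on $L^p_\om$ by Schur's test with test function $h(z)=(1-|z|^2)^{-(t-n-1)/2}$, and finally estimates $\|f\|_{L^p_\om}^p\lesssim\sum_k|\lambda_k|^p$ using the finite-overlap property of Lemma \ref{cover}. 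Your approach instead stays entirely at the discrete level: H\"older against the auxiliary sum $\sum_k Y_k(z)$, which you control by separation plus Theorem 1.12 of Zhu, and then one application of Lemma \ref{whatwt}. The two arguments are close cousins --- your H\"older split with the weights $Y_k^{\pm(p-1)/p}$ is precisely the discrete shadow of the Schur test, and both ultimately rest on the same pair of integral estimates (Lemmas \ref{whatw} and \ref{whatwt}). What the paper's version buys is a reusable boundedness statement for $\Lambda$ and exponent bookkeeping concentrated in the single choice of $h$; what yours buys is the avoidance of the intermediate objects $\Lambda$ and $f$ (and of the overlap-counting step), at the cost of the three-way compatibility check on $A$ and $B$, which you have verified correctly: $A=n+\epsilon$, $B=A+\beta(\om)/(p-1)+\epsilon$ with $\epsilon$ small leaves a margin of essentially $n/p$ under the hypothesis $t>n+(\beta(\om)+\lambda(\om)n+n)/p$.
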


\begin{proof}
   Let 
   $$
   g_k(z):={ (1-|a_k|^2)^{t-{n\over p}}  \over  \hat{\om}(a_k)^{1\over p} (1-\langle z,a_k\rangle )^t },\,\,z\in \BB_n.
   $$
   Then $g_k\in A^p_\om$ with $\|g_k\|_{A^p_\om}\approx 1 $ by (\ref{Kas}) for $t>{n\over p}$. Therefore, for $0<p\le 1$, there exists $C:=C(p,\om)>0$ such that
   $$
   \|F\|_{A^p_\om}\le \sum_{k=1}^\infty|\lambda_k|^p\|g_k\|_{A^p_\om}^p\le C \sum_{k=1}^\infty|\lambda_k|^p.
   $$
   Next consider the case $1<p<\infty$. For $g\in H(\BB_n)$, let 
   $$
   \Lambda g(z):=\int_{\BB_n}{ (1-|w|^2)^{t-n-1} \over  |1-\langle z,w\rangle| ^t}g(w)dV(w),\,\,z\in \BB_n.
   $$
   Consider the function 
   $$
   f(z):=\sum_{k=1}^\infty { |\lambda_k| \chi_{D(a_k,r)}(z) \over \om(a_k)^{1\over p} V\left( D(a_k,r)\right) ^{1\over p} },
   $$
   where $\chi$ is the characteristic function. We get
   	\begin{align*}
   	\Lambda f(z)=&\sum_{k=1}^\infty { |\lambda_k|  \over \om(a_k)^{1\over p} V\left( D(a_k,r)\right) ^{1\over p} }\int_{D(a_k,r)}{ (1-|w|^2)^{t-n-1} \over  |1-\langle z,w\rangle| ^t}dV(w)\\
   	\approx& \sum_{k=1}^\infty { |\lambda_k|  \over  V\left( D(a_k,r)\right) ^{1\over p} } \cdot{  (1-|a_k|^2)^{t+{1\over p}-n-1}\over \hat{\om}(a_k)^{1\over p} |1-\langle z,a_k\rangle| ^t} \int_{D(a_k,r)} {  \hat{\om}(|w|)^{1\over p}\over \om(w)^{1\over p}(1-|w|)^{1\over p}} dV(w)\\
   	\gtrsim & \sum_{k=1}^\infty { |\lambda_k|  \over \hat{\om}(a_k)^{1\over p} V\left( D(a_k,r)\right) ^{1\over p} } \cdot{  (1-|a_k|^2)^{t+{1\over p}-n-1}\over  |1-\langle z,a_k\rangle| ^t} V(D(a_k,r)) \\
   		\approx& \sum_{k=1}^\infty |\lambda_k| { (1-|a_k|^2)^{t-{n\over p}} \over \hat{\om}(a_k)^{1\over p} |1-\langle z,a_k\rangle| ^t }\ge |F(z)|
   		\end{align*}
   		for all $z\in \BB_n$.  We claim that $ \Lambda$ is bounded on $L^p_\om$ at this moment. Then there is a constant $C:=C(t,p,\om)>0$ such that
   \begin{equation}\label{equ1}
   		\|F\|_{A^p_\om}\le C \|f\|_{L^p_\om}.
   	\end{equation}
   Noting that each point $z\in\BB_n$ belongs to at most $N$ of the sets $D(a_k,4r)$ by Lemma \ref{cover}, we have 
   $$
   |f(z)|^p\le N^{p-1} \sum_{k=1}^\infty { |\lambda_k|^p \chi_{D(a_k,4r)}(z) \over \om(a_k) V\left( D(a_k,r)\right) }, \, z\in \BB_n.
   $$
   Therefore, integrating term by term, we get
   $$
   \int_{\BB_n}|f(z)|^p\om(z)dV(z)\le N^{p-1}\sum_{k=1}^\infty |\lambda_k|^p,
   $$
   which combined with (\ref{equ1}) deduces the desired result.
   
    Finally, we only need to verify our claim, that is, $ \Lambda$ is bounded on $L^p_\om$. Let
  $$
 h(z)=(1-|z|^2)^{ -{t-n-1\over 2}},\,\,z\in \BB_n.
 $$
By Lemmas \ref{whatw} and \ref{whatwt}, we obtain
 \begin{align*}
 	\int_{\BB_n}{ (1-|w|^2)^{t-n-1} \over  |1-\langle z,w\rangle| ^t}h^2(w)\om(w)dV(w)
 	=&\int_{\BB_n}{ \om(w) \over |1-\langle z,w\rangle| ^t }dV(w)\\
 	\lesssim & {\hat{\om}(z) \over (1-|z|^2)^{ t-n}}\lesssim {1 \over (1-|z|^2)^{ t-n-1} }
 \end{align*}
 and
 \begin{align*}
 	\int_{\BB_n}{ (1-|w|^2)^{t-n-1} \over  |1-\langle z,w\rangle| ^t}h^2(z)\om(z)dV(z)
 	= & \int_{\BB_n}{(1-|w|^2)^{t-n-1} \om(z)\over (1-|z|^2)^{t-n-1}|1-\langle z,w\rangle| ^t }dV(z)\\
 	\lesssim &{\hat{\om}(w) \over (1-|w|^2)^{ t-n}}\lesssim {1 \over (1-|w|^2)^{ t-n-1} },
 \end{align*}
  which implies that  $ \Lambda$ is bounded on $L^p_\om$ by Schur's test. The proof is complete.
\end{proof}
  
  The following lemma extends the one dimensional result [\citen{lr}, Lemma 1] to higher dimensions and   classical weighted Bergman spaces [\citen{kw}, Lemma 2.2] to weighted Bergman spaces induced by doubling weights.

  \begin{lem}\label{fabD}	
  	Let $0<p\le q<\infty$, $\om\in \mathcal{D}$ and $0<r_2<r_1<1$. Then there is a constant $C:=C(\om,p,q, r_1,r_2)>0$ such that
  	$$
  	|f(a)-f(b)|^q\le C{ \rho(a,b)^q \over  \left(  (1-|a|)^{n}\hat{\om}(a) \right)^{q \over p } }\int_{D(a,r_1)}|f(z)|^p W(z) dV(z)
  	$$
  	for any $a\in\BB_n$, $b\in D(a,r_2)$ and $f\in A^p_\om$.
  \end{lem}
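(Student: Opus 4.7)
The plan is to reduce $|f(a)-f(b)|$ to a supremum estimate on a Bergman ball via a M\"obius change of variable and a Cauchy-type gradient bound, then absorb the supremum into a weighted $L^p$-average by the sub-mean-value inequality.  First I would linearize by the M\"obius involution $\varphi_a$: set $g:=f\circ\varphi_a$ and $c:=\varphi_a(b)$, so that $|c|=\rho(a,b)\le\tanh r_2$ and $f(a)-f(b)=g(0)-g(c)$.  Applying the fundamental theorem of calculus to $g$ along the segment from $0$ to $c$, then Cauchy's estimate to the holomorphic gradient on the Euclidean ball of radius $\tanh r'$ for some intermediate $r_2<r'<r_1$, gives
\begin{equation*}
|g(0)-g(c)|\le C_{r_1,r_2}\,|c|\sup_{|w|\le\tanh r'}|g(w)|.
\end{equation*}
Pulling back through $\varphi_a$ and using the M\"obius invariance of the Bergman metric then yields
\begin{equation*}
|f(a)-f(b)|\le C_{r_1,r_2}\,\rho(a,b)\sup_{z\in D(a,r')}|f(z)|.
\end{equation*}

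Second, I would bound the sup by a weighted $L^p$-average on $D(a,r_1)$.  Combining (\ref{wDzr}) with Lemma \ref{lem1}(ii), the twisted weight $W(w)=\hat\om(w)/(1-|w|)$ is essentially constant on $D(a,r_1)$, i.e.\ $W(w)\approx W(a)$ uniformly there.  For each $z\in D(a,r')$ the Bergman ball $D(z,r_1-r')$ lies inside $D(a,r_1)$; applying the standard sub-mean-value inequality to the plurisubharmonic function $|f|^p$ over a polydisk inscribed in $D(z,r_1-r')$ and then inserting the constancy of $W$ gives
\begin{equation*}
|f(z)|^p\lesssim\frac{1}{V(D(z,r_1-r'))}\int_{D(z,r_1-r')}|f|^p\,dV\lesssim\frac{1}{W(a)\,V(D(a,r_1))}\int_{D(a,r_1)}|f|^pW\,dV.
\end{equation*}
Using $V(D(a,r_1))\approx (1-|a|)^{n+1}$ and $W(a)=\hat\om(a)/(1-|a|)$ gives $W(a)\,V(D(a,r_1))\approx (1-|a|)^n\hat\om(a)$.

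Putting the two steps together, taking the supremum over $z\in D(a,r')$, and raising to the $(q/p)$-th power produces
\begin{equation*}
|f(a)-f(b)|^q\lesssim\frac{\rho(a,b)^q}{\bigl((1-|a|)^n\hat\om(a)\bigr)^{q/p}}\left(\int_{D(a,r_1)}|f(z)|^pW(z)\,dV(z)\right)^{q/p},
\end{equation*}
which is the claimed bound.  The main technical step is the uniform comparison $W(w)\approx W(a)$ on $D(a,r_1)$: this is what allows us to replace the weighted sub-mean-value on $D(z,r_1-r')$ by an unweighted one and then reinsert $W$, and it rests on combining (\ref{wDzr}) with Lemma \ref{lem1}(ii).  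Beyond this, the argument is a routine merger of Cauchy's gradient estimate with a sub-mean-value inequality, and the choice of the intermediate radius $r'$ is only a formality.
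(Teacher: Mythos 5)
Your route --- M\"obius transfer to the origin, a Cauchy gradient estimate giving $|f(a)-f(b)|\lesssim\rho(a,b)\sup_{D(a,r')}|f|$, then the sub-mean-value property of $|f|^p$ on the smaller balls $D(z,r_1-r')\subset D(a,r_1)$ together with $W\approx W(a)$ on $D(a,r_1)$ --- is sound and essentially equivalent to the paper's, which instead quotes the $L^p$ oscillation estimate $|f(0)-f(b)|^p\lesssim|b|^p\int_{\tanh(r_1)\BB_n}|f|^p\,dV$ from \cite{kw} and transports it with the Jacobian $\bigl((1-|a|^2)/|1-\langle z,a\rangle|^2\bigr)^{n+1}$. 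Your version of the first step packages the subharmonicity pointwise rather than at the origin; the ingredients (Cauchy estimate, sub-mean-value inequality, (\ref{wDzr}), Lemma \ref{lem1}(ii)) are all legitimately available, and the identity $W(a)V(D(a,r_1))\approx(1-|a|)^n\hat\om(a)$ is correct.

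The gap is at the very end: your final display has $\bigl(\int_{D(a,r_1)}|f|^pW\,dV\bigr)^{q/p}$, whereas the lemma asserts the bound with the integral to the \emph{first} power. For $p<q$ these are not the same, so what you have written is not ``the claimed bound.'' The missing step is the one the paper performs: write $\bigl(\int_{D(a,r_1)}|f|^pW\,dV\bigr)^{q/p}\le\|f\|_{A^p_W}^{q-p}\int_{D(a,r_1)}|f|^pW\,dV\lesssim\|f\|_{A^p_\om}^{q-p}\int_{D(a,r_1)}|f|^pW\,dV$ using Lemma \ref{ApwW}. Note that this forces the constant to carry the factor $\|f\|_{A^p_\om}^{q-p}$ (the lemma as literally stated is scale-inhomogeneous when $q>p$: replacing $f$ by $\lambda f$ scales the left side by $\lambda^q$ and the right side by $\lambda^p$, so the $\|f\|^{q-p}$ factor cannot be avoided; the paper absorbs it silently into $C_2$, and it reappears explicitly where the lemma is used, e.g.\ via Lemma \ref{hatmu}). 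So your estimate is actually the homogeneous, correct core of the result, but you must add this last line --- and acknowledge the $\|f\|_{A^p_\om}^{q-p}$ dependence --- to match what is being claimed.
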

  
\begin{proof}
  Note that for any $z\in \BB_n \backslash \{0\}$ and $r>0$, $\Delta(z,r)$ is an ellipsoid consisting of all $w\in \BB_n$ such that 
\begin{equation}\label{deltaPQ}
  { |P_z(w)-c|^2  \over r^2s^2 }+{ |Q_z(w)|^2 \over r^2s }<1,
\end{equation}
  where
   \begin{align*}
   	c={ (1-r^2)z \over  1-r^2|z|^2 } {\rm ~and~} s={ 1-|z|^2 \over 1-r^2|z|^2 }.
\end{align*}
  This combined with (\ref{DPQ}) and (2.7) in \cite{kw} deduces that
  $$
  |f(0)-f(b)|^p\le C|b|^p\int_{\tanh(r_1)\BB_n} |f(z)|^pdV(z)
  $$
  for $b\in D(a,\tanh(r_2))$ and some $C:=C(r_1,r_2)>0$, where $a\in \BB_n$ and $0<r_2<r_1<1$.
  Write $R:=R(r_1)=\tanh r_1$.
  By Lemma \ref{lem1}, we obtain
   \begin{align*}
   	|f(a)-f(b)|^p = & |f\circ \varphi_a(0)-f\circ \varphi_a(\varphi_a(b))|^p\\
   	\le & C|\varphi_a(b)|^p\int_{R\BB_n} |f\circ \varphi_a(z)|^pdV(z)\\
   	\le & C \rho(a,b)^p \int_{D(a,R)} |f(z)|^p\left( { 1-|a|^2 \over |1-\langle z,a\rangle |^2 } \right) ^{n+1} dV(z)\\
   	\le & C_1  {\rho(a,b)^p \over (1-|a|)^{n}\hat{\om}(a) } \int_{D(a,R)} |f(z)|^p { \hat{\om}(z) \over 1-|z|}  dV(z)\\
   	= & C_1  {\rho(a,b)^p \over (1-|a|)^{n}\hat{\om}(a) } \int_{D(a,R)} |f(z)|^p W(z) dV(z)
   \end{align*}
   for some $C_1:=C_1(\om, r_1,r_2)>0$.
   
   Next, we consider the case $p<q$. Using the result of $p=q$ above and Lemma \ref{ApwW}, we get
    \begin{align*}
   	|f(a)-f(b)|^q\le & C^{q \over p} {\rho(a,b)^q \over \left( (1-|a|)^{n}\hat{\om}(a) \right) ^{q \over p}}\left( \int_{D(a,R)} |f(z)|^p  W(z)  dV(z) \right) ^{q \over p}\\
   	\le & C^{q \over p} {\rho(a,b)^q \|f\|^{q-p}_{A^p_W}\over \left( (1-|a|)^{n}\hat{\om}(a) \right) ^{q \over p}} \int_{D(a,R)} |f(z)|^p  W(z)  dV(z)\\
   	\le & C_2 {\rho(a,b)^q \over \left( (1-|a|)^{n}\hat{\om}(a) \right) ^{q \over p}} \int_{D(a,R)} |f(z)|^p  W(z)  dV(z)
   	  \end{align*}
   	  for some $C_2:=C_2(\om,p,q, r_1,r_2)>0$.
   	  The proof is complete.
\end{proof}

  \section{Carleson measure}
In this section, we characterize the (vanishing) $q$-Carleson measure for $A^p_\om$ for full $0<p,q<\infty$.
To present our results, we introduce a notation first.
  Given $\om \in \mathcal{D},0<r<1$, $0<s<\infty$ and a positive Borel measurev $\mu$ on $\BB_n$, the weighted mean function $\hat{\mu}_{\om,r,s}$ is defined by 
  $$
  \hat{\mu}_{\om,r,s}(z):={ \mu(D(z,r)) \over \om(D(z,r))^s },\,\,z\in\BB_n.
  $$
  Write $ \hat{\mu}_{\om,r}:=\hat{\mu}_{\om,r,1}$ for simplicity.

To  characterize the $q$-Carleson measure for $A^p_\om$ in the case $p\le q$, we need to prove the following lemma.

\begin{lem}\label{hatmu}
	Let $ 0<r<1$, $0<p\le q<\infty$, $\om\in \mathcal{D}$ and $\mu$ be a positive Borel measure on $\BB_n$. Then for any $f\in A^p_\om$, one has
	$$
	\int_{\BB_n}|f(z)|^qd\mu(z)\lesssim \|f\|^{q-p}_{A^p_\om}\int_{\BB_n}|f(z)|^p  \hat{\mu}_{\om,r,{q \over p}}(z)W(z)dV(z).
	$$
\end{lem}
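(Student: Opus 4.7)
The plan is to discretize via a Bergman-metric lattice and combine two distinct pointwise bounds: a point-evaluation bound to convert the factor $|f|^{q-p}$ into a power of $\|f\|_{A^p_\om}$, and a sub-mean value estimate with the twisted weight $W$ to convert the remaining $|f|^p$ into the integral on the right-hand side. Given $r\in(0,1)$, I would fix $r_0:=r/3$, take an $r_0$-lattice $\{a_k\}$ via Lemma \ref{cover}, and write
$$
\int_{\BB_n}|f|^q\,d\mu\le \sum_k \int_{D(a_k,r_0)}|f|^q\,d\mu .
$$

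On $D(a_k,r_0)$ I would split $|f|^q=|f|^{q-p}|f|^p$ and estimate each factor separately. First, by subharmonicity of $|f|^p$ on $D(z,r_0)$, together with $V(D(z,r_0))\,W(z)\approx \om(D(z,r_0))$ and $W(w)\approx W(z)$ for $w\in D(z,r_0)$ from Lemma \ref{lem1}(ii), and using Lemma \ref{ApwW} to pass from $W$-weighted integrals to the $A^p_\om$-norm, one obtains the standard point evaluation $|f(z)|^p\lesssim \|f\|_{A^p_\om}^p/\om(D(z,r_0))$; raising to the power $(q-p)/p$ bounds $\sup_{D(a_k,r_0)}|f|^{q-p}$ by $\|f\|_{A^p_\om}^{q-p}/\om(D(a_k,r_0))^{(q-p)/p}$. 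Second, again by subharmonicity,
$$
\sup_{D(a_k,r_0)}|f|^p \lesssim \frac{1}{V(D(a_k,2r_0))}\int_{D(a_k,2r_0)}|f|^p\,dV \approx \frac{1}{\om(D(a_k,r_0))}\int_{D(a_k,2r_0)}|f|^p\,W\,dV,
$$
pulling $W(a_k)\approx W(w)$ out of the integral. Multiplying these two estimates and integrating $d\mu$ over $D(a_k,r_0)$ gives a single-ball bound
$$
\int_{D(a_k,r_0)}|f|^q\,d\mu \lesssim \|f\|_{A^p_\om}^{q-p}\,\hat{\mu}_{\om,r_0,q/p}(a_k)\int_{D(a_k,2r_0)}|f|^p\,W\,dV.
$$

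Finally I would sum over $k$ and swap the order of summation and integration. For each $w$ with $w\in D(a_k,2r_0)$, the triangle inequality in the Bergman metric yields $D(a_k,r_0)\subset D(w,r)$ (this is why $r_0=r/3$ was chosen), hence $\mu(D(a_k,r_0))\le \mu(D(w,r))$; simultaneously $\om(D(a_k,r_0))\approx \om(D(w,r))$, because both sides are comparable to $(1-|w|)^n\hat{\om}(w)$ by Lemma \ref{lem1}(i)(ii) and the doubling hypothesis $\om\in\mathcal{D}$. Consequently $\hat{\mu}_{\om,r_0,q/p}(a_k)\lesssim \hat{\mu}_{\om,r,q/p}(w)$, and by the bounded-overlap property (iii) of Lemma \ref{cover} at most $N$ indices $k$ contribute for a given $w$. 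This produces exactly the right-hand side claimed in the lemma.

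The main obstacle is the bookkeeping in the last step: transferring the lattice-point weight $\hat{\mu}_{\om,r_0,q/p}(a_k)$ to the pointwise weight $\hat{\mu}_{\om,r,q/p}(w)$ at the same radius $r$ appearing in the statement. This forces an a priori choice of the lattice radius strictly smaller than $r$ and requires the $\om$-doubling property to pass from $\om(D(a_k,r_0))$ to $\om(D(w,r))$. Once the lattice scale is fixed correctly, the rest of the argument is a routine reassembly via Fubini and the finite-overlap estimate.
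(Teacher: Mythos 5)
Your argument is correct, but it is organized differently from the paper's. The paper proves the lemma without any lattice: it applies the sub-mean value property of $|f|^p$ directly at every point $z$ in the support of $\mu$ to get
$$
|f(z)|^q\lesssim \frac{\|f\|_{A^p_\om}^{q-p}}{\bigl(\hat{\om}(z)(1-|z|^2)^n\bigr)^{q/p}}\int_{D(z,r)}|f(w)|^pW(w)\,dV(w),
$$
where the factor $\|f\|_{A^p_\om}^{q-p}$ comes from bounding the $(q-p)/p$ power of the inner integral by $\|f\|_{A^p_W}^{q-p}\approx\|f\|_{A^p_\om}^{q-p}$ (Lemma \ref{ApwW}); it then integrates against $d\mu(z)$, swaps the order by Fubini using the symmetry $w\in D(z,r)\Leftrightarrow z\in D(w,r)$ and $\hat\om(z)(1-|z|^2)^n\approx\hat\om(w)(1-|w|^2)^n$, and identifies $\mu(D(w,r))/\om(D(w,r))^{q/p}$ via the comparison $\om(D(w,r))\approx\hat\om(w)(1-|w|^2)^n$. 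Your version is a discretization of this: you replace the continuous Fubini step by a covering with an $r/3$-lattice, prove a single-ball estimate, and reassemble using the finite-overlap property of Lemma \ref{cover}. All of your individual steps check out --- the two pointwise bounds are the standard point-evaluation and sub-mean value estimates, the exponent bookkeeping $(q-p)/p+1=q/p$ is right, the inclusion $D(a_k,r_0)\subset D(w,r)$ for $w\in D(a_k,2r_0)$ follows from the triangle inequality with $r_0=r/3$, and the comparability $\om(D(a_k,r_0))\approx\om(D(w,r))$ is exactly the paper's (\ref{omDS}) combined with Lemma \ref{lem1}(ii). What the paper's route buys is brevity: no lattice, no radius adjustment, and the radius $r$ in the statement appears directly. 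What your route buys is a reusable single-ball inequality and a structure that matches the discrete arguments used elsewhere in the paper (e.g.\ in the proof of Theorem \ref{qpCarleson}), at the cost of the extra overlap and radius bookkeeping you already identified.
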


\begin{proof}
		For $f\in A^p_\om$, using the subharmonic property of $ |f|^p$ (see Lemma 2.24 in \cite{z}), (\ref{wDzr}),  Lemmas \ref{lem1} and \ref{ApwW}, we have
	\begin{align*}
		|f(z)|^q\lesssim & \left( { 1 \over (1-|z|^2)^{n+1}} \int_{D(z,r)} |f(w)|^pdV(w) \right) ^{q \over p}\\
		\lesssim & \left( {1 \over \hat{\om}(z)(1-|z|^2)^n } \int_{D(z,r)} |f(w)|^p W(w) dV(w) \right) ^{q \over p}\\
		\lesssim &  {1 \over \left( \hat{\om}(z)(1-|z|^2)^n \right) ^{q\over p}} \|f\|^{q-p}_{A^p_\om}\int_{D(z,r)} |f(w)|^p W(w) dV(w).
	\end{align*}
	Combined with (\ref{DPQ}), (\ref{deltaPQ}), Proposition 1 in \cite{dl2} and Lemma \ref{lem1}, we see that 
 \begin{equation}\label{omDS}
	\om (D(a,r)) \approx \om(S_a)\approx \hat{\om}(a) (1-|a|^2)^n.
\end{equation}
	Therefore, by Fubini's theorem and Lemma \ref{ApwW},
	\begin{align*}
		\int_{\BB_n} |f(z)|^qd\mu(z) \lesssim & \|f\|^{q-p}_{A^p_\om}\int_{\BB_n} {1 \over \left( \hat{\om}(z)(1-|z|^2)^n \right) ^{q\over p}} \int_{D(z,r)} |f(w)|^p W(w) dV(w) d\mu(z)\\
		\lesssim & \|f\|^{q-p}_{A^p_\om}\int_{\BB_n} |f(w)|^p W(w) { \mu(D(w,r)) \over  \left( \hat{\om}(w)(1-|w|^2)^n \right) ^{q\over p} }dV(w)\\
		\lesssim & \|f\|^{q-p}_{A^p_\om}\int_{\BB_n}|f(w)|^p  \hat{\mu}_{\om,r,{q \over p}}(w)W(w)dV(w).
	\end{align*}
	The proof is complete.
\end{proof}

The following result gives the characterization of the (vanishing) $q$-Carleson measure in the case $A^p_\om$ for $p\le q$.

\begin{thm}\label{pqCarleson}
	Let $0<p\le q<\infty$, $\om\in \mathcal{D}$ and $\mu$ be a positive Borel measure on $\BB_n$. 
	Then there is $ r_0:=r_0(\om)\in (0,1)$ such that the following statements hold:
	\begin{enumerate}
		\item[(i)]
	 $ \mu$ is a $q$-Carleson measure for $A^p_\om$ if and only if for some (or all) $r\in [r_0,1)$ such that $\hat{\mu}_{\om,r,{q \over p}} \in L^\infty(\BB_n)$;
	 	\item[(ii)] $ \mu$ is a vanishing $q$-Carleson measure for $A^p_\om$ if and only if for some (or all) $r\in [r_0,1)$ such that $\hat{\mu}_{\om,r,{q \over p}}(a) \to 0$ as $|a|\to 1$.
	 \end{enumerate}
\end{thm}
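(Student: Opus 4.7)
\textbf{Proof proposal for Theorem \ref{pqCarleson}.}

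The plan is to prove sufficiency via Lemma \ref{hatmu} and necessity via appropriately normalized kernel-type test functions.

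For the sufficiency in part (i), assume $\hat{\mu}_{\om,r,q/p}\in L^\infty(\BB_n)$ for some $r\in[r_0,1)$. Lemma \ref{hatmu} gives
\[
\int_{\BB_n}|f(z)|^q\,d\mu(z)\lesssim \|f\|_{A^p_\om}^{q-p}\int_{\BB_n}|f(z)|^p\hat{\mu}_{\om,r,q/p}(z)W(z)\,dV(z)\lesssim \|\hat{\mu}_{\om,r,q/p}\|_\infty\|f\|_{A^p_\om}^{q-p}\|f\|_{A^p_W}^p,
\]
and Lemma \ref{ApwW} then yields $\|f\|_{L^q_\mu}\lesssim \|\hat{\mu}_{\om,r,q/p}\|_\infty^{1/q}\|f\|_{A^p_\om}$, so $I_d:A^p_\om\to L^q_\mu$ is bounded. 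For the sufficiency in part (ii), I invoke Lemma \ref{cpt}: if $\{f_k\}$ is bounded in $A^p_\om$ and $f_k\to 0$ uniformly on compacta, I split the $\mu$-integral over $\{|z|\le \rho_0\}$ and its complement. The first piece vanishes in the limit by uniform convergence, while the tail piece is estimated via Lemma \ref{hatmu} by $\sup_{|z|>\rho_0}\hat{\mu}_{\om,r,q/p}(z)$ times a bounded factor, which can be made arbitrarily small by choosing $\rho_0$ close to 1.

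For necessity, I will use the test functions $K_a^s(z)=(1-\langle a,z\rangle)^{-s}$ with $s>(\lambda_0(\om) n+n)/p$, so that (\ref{Kas}) applies, giving $\|K_a^s\|_{A^p_\om}^q\approx \hat{\om}(a)^{q/p}(1-|a|)^{n q/p-qs}$. On $D(a,r)$, property (\ref{wDzr}) gives $|1-\langle a,z\rangle|\approx (1-|a|)$, hence $|K_a^s(z)|^q\gtrsim (1-|a|)^{-qs}$, and thus
\[
\int_{\BB_n}|K_a^s|^q\,d\mu\gtrsim (1-|a|)^{-qs}\mu(D(a,r)).
\]
Boundedness of $I_d$ (resp.~its compactness applied to the normalized $K_a^s/\|K_a^s\|_{A^p_\om}$, which converge to $0$ uniformly on compacta and form a bounded set in $A^p_\om$) combined with (\ref{omDS}) gives
\[
\hat{\mu}_{\om,r,q/p}(a)=\frac{\mu(D(a,r))}{\om(D(a,r))^{q/p}}\approx \frac{\mu(D(a,r))}{\hat{\om}(a)^{q/p}(1-|a|)^{nq/p}}\lesssim 1 \quad(\text{resp.~}\to 0 \text{ as }|a|\to 1).

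For the assertion that ``some $r\in[r_0,1)$'' is equivalent to ``all $r\in[r_0,1)$,'' suppose $r_0\le r<r'<1$. Trivially $\mu(D(a,r))\le \mu(D(a,r'))$, so the ``all implies some'' direction is immediate. Conversely, given that $\hat{\mu}_{\om,r,q/p}$ is bounded (resp.~vanishes at the boundary), I cover $D(a,r')$ by finitely many balls $D(a_j,r)$ using Lemma \ref{cover} with uniform multiplicity $N$, with each center $a_j$ satisfying $1-|a_j|\approx 1-|a|$. Applying Lemma \ref{lem1}(ii) together with (\ref{omDS}) yields $\om(D(a_j,r))\approx \om(D(a,r'))$, so that $\hat{\mu}_{\om,r',q/p}(a)\lesssim \sum_j\hat{\mu}_{\om,r,q/p}(a_j)$, and the required boundedness (resp.~vanishing) transfers from $r$ to $r'$.

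The main technical obstacle is the last step: controlling the number of balls needed in the covering independently of $a$, and ensuring that $1-|a_j|\approx 1-|a|$ with constants independent of $a$, so that the comparison $\om(D(a_j,r))\approx \om(D(a,r'))$ holds uniformly. This is precisely why a threshold $r_0=r_0(\om)$ is required; it is chosen large enough so that the doubling behavior of $\om$ (encoded via Lemma \ref{lem1}) absorbs the geometric distortion coming from increasing the radius from $r$ to $r'$.
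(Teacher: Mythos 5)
Your proposal is correct and follows essentially the same route as the paper: Lemma \ref{hatmu} combined with Lemma \ref{ApwW} for sufficiency, normalized reproducing-kernel test functions together with (\ref{wDzr}), (\ref{Kas}) and (\ref{omDS}) for necessity, and a split of the integral into a compact core and a boundary annulus for the vanishing case (where your $\sup_{|z|>\rho_0}\hat{\mu}_{\om,r,q/p}$ should strictly be taken over a slightly larger annulus $|z|>\rho_1$ with $1-\rho_1\approx_r 1-\rho_0$, since $D(w,r)$ can meet $\{|z|>\rho_0\}$ for $|w|$ slightly below $\rho_0$ --- exactly the point the paper handles with its restricted measure $\mu_r$). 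Your explicit covering argument for the ``some $r$ versus all $r$'' equivalence is a sound addition that the paper's proof leaves implicit.
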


\begin{proof}
	(i). If $\hat{\mu}_{\om,r,{q \over p}} \in L^\infty(\BB_n)$,
	then  $ \mu$ is a $q$-Carleson measure for $A^p_\om$ by Lemmas \ref{hatmu} and \ref{ApwW}.
 We next verify the necessity. Suppose that  $ \mu$ is a $q$-Carleson measure for $A^p_\om$. Consider the test function
  \begin{equation}\label{fa}
f_a(z)={1\over \hat{\omega}(a)^{1\over p} (1-|a|^2)^{n\over p}}\left(    {1-|a|^2\over |1-\langle z,a\rangle|}\right)  ^{ \gamma+n\over p} ,\,\,a,z\in \BB_n,
\end{equation}
where $\gamma$ is large enough.
From Lemma \ref{lem1} and Lemma 6 in \cite{dlls2}, $f_a\in A^p_\omega$.
Applying (\ref{wDzr}), (\ref{omDS}) and the fact $W(S_a)\approx \om(S_a),a\in \BB_n$ ( see the proof of Lemma \ref{ApwW}), we have
	\begin{equation}\label{wDhw}
	\begin{aligned}
	(1-|a|^2)^{n}\hat{\om}(a)\approx & {\hat{\om}(a) \over 1-|a|^2} \int_{D(a,r)}dV(z)\approx \int_{D(a,r)} {\hat{\om}(z) \over 1-|z|^2} dV(z)\\
	= & W(D(a,r))\approx W(S_a)\approx \om(S_a) \approx \om(D(a,r)).
\end{aligned}
\end{equation}
Therefore, for $a\in \BB_n$, by  the assumption,
$$
\hat{\mu}_{\om,r,{q \over p}}(a) \approx { \mu(D(a,r)) \over  \left( \hat{\om}(a)(1-|a|^2)^n \right) ^{q\over p} } \lesssim \|f_a\|^q_{L^q_\mu} \lesssim \|f\|^q_{A^p_\om}\approx 1,
$$
which means that
$$
\mu(D(a,r))\lesssim \left( \hat{\om}(a)(1-|a|^2)^n \right) ^{q\over p} \lesssim  (\om(D(a,r)))^{q\over p}.
$$
This completes the proof of (i).

(ii) Assume that $ \mu$ is a vanishing $q$-Carleson measure for $A^p_\om$.
Consider the test function $f_a$ defined as (\ref{fa}).
Then $f_a\in A^p_\omega$ and converges to $0$ uniformly on any compact subset of $\BB_n$ as $|a|\to 1$.
Therefore, by Lemma \ref{cpt},
$$
0=\lim_{|a|\to 1^-} \|f_a\|^q_{L^q_\mu}\ge \lim_{|a|\to 1^-} \int_{D(a,r)} |f_a(z)|^qd\mu(z) \gtrsim \lim_{|a|\to 1^-} { \mu(D(a,r)) \over  \left( \hat{\om}(a)(1-|a|^2)^n \right) ^{q\over p} },
$$
which means that $\hat{\mu}_{\om,r,{q \over p}}(a) \to 0$ as $|a|\to 1$.

Conversely, assume that $\hat{\mu}_{\om,r,{q \over p}}(a) \to 0$ as $|a|\to 1$. Then for any $\varepsilon>0$, there is $r:=r(\om) \in (0,1)$ such that ${ \mu(D(a,r)) \over  \left( \hat{\om}(a)(1-|a|^2)^n \right) ^{q\over p} }<\varepsilon$ when $|a|>r$.
Put $d\mu_r(z):=\chi_{r\le |z|<1}d\mu(z)$.
If $ |a|\ge r$, then $\mu_r(D(a,r))\le \mu(D(a,r))$. 
If $0<|a|<r$, by the fact $D(a,r) = \varphi_a (D(0,r))$, then
\begin{align*}
	\mu_r(D(a,r))  = & (1-|a|^2)^{n+1} \int_{D(0,r)\backslash D(0,\tanh^{-1}r)} { d\mu(z)  \over |1-\langle z, a \rangle |^{2(n+1)} } \\
	\lesssim & \mu(D(0,r)\backslash D(0,\tanh^{-1}r)).
	\end{align*}
	From [\citen{z}, page 64], we see that there is a finite sequence $\{a^{(1)},\cdots,a^{(N)}\}$ such that
	$$
	D(0,r)\backslash D(0,\tanh^{-1}r)\subset \cup_{j=1}^N D(a^{(j)},\delta) 
	$$
	 for some $\delta\in (0,1)$, where $a^{(j)}\in D(0,r)\backslash D(0,\tanh^{-1}r)$, $j=1,\cdots,n$.
	 Therefore,
	 $$
	 	\mu_r(D(a,r)) \lesssim \cup_{j=1}^ND(a^{(j)},\delta) \lesssim \varepsilon,
	 $$
	which implies that $\hat{\mu}_{\om,r,{q \over p}}(a) \lesssim \varepsilon $.
Thus, $\|I_d\|^q_{A^p_\om\to L^q_{\mu_r}} \lesssim \varepsilon$.
Let $\{f_k\}\in A^p_\om$ and converge to $0$ uniformly on any compact subset of $\BB_n$. Then 
\begin{align*}
	\lim\sup_{k\to \infty}\|f_k\|^q_{L^q_\mu} = &	\lim\sup_{k\to \infty} \left( \int_{\BB_n} |f_k(z)|^qd\mu_r(z) + \int_{r\BB_n}  |f_k(z)|^qd\mu(z)\right) \\
	\lesssim & \varepsilon 	\lim\sup_{k\to \infty}\|f_k\|^q_{A^p_\om}\lesssim \varepsilon,
	\end{align*}
	which gives that $\mu$ is a vanishing $q$-Carleson measure for $A^p_\om$ by the  arbitrariness of $\varepsilon$.
\end{proof}

Next, we consider the case $q>p$.

\begin{thm}\label{qpCarleson}
	Let $0<q<p<\infty$, $\om\in \mathcal{D}$ and $\mu$ be a positive Borel measure on $\BB_n$. 
	Then there is $ r_0:=r_0(\om)\in (0,1)$ such that the following statements are equivalent: 
 \begin{enumerate}
 	\item[(i)] $ \mu$ is a $q$-Carleson measure for $A^p_\om$;
 	\item[(ii)] $ \mu$ is a vanishing $q$-Carleson measure for $A^p_\om$;
 	\item[(iii)] For some (or all)  $r\in [r_0,1)$, $\hat{\mu}_{\om,r} \in L^{p\over p-q}_W$; 
 	\item[(iv)]  For some (or all)  $r\in [r_0,1)$ and any $\delta$-lattice $\{a_k\}$,
 	$$\hat{\mu}_{\om,r,{q \over p}}(a_k) \in l^{p\over p-q}.$$ 
 	\end{enumerate}
\end{thm}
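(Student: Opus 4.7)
The plan is to establish (ii)$\Rightarrow$(i) trivially, prove (iii)$\Leftrightarrow$(iv) by a lattice discretisation, and then close the loop through the two genuine implications (iv)$\Rightarrow$(ii) and (i)$\Rightarrow$(iv); the ``some or all $r$'' clause will then follow from the circular structure together with standard covering arguments to swap radii. For the discretisation, I would fix an $r$-lattice $\{a_k\}$ from Lemma \ref{cover} and observe that on $D(a_k,r)$ one has $\hat\mu_{\om,r}(z)\approx \hat\mu_{\om,r}(a_k)$ and $W(z)\approx W(a_k)$ (by Lemma \ref{lem1}(ii) and doubling), while $\int_{D(a_k,r)}W\,dV\approx W(a_k)(1-|a_k|)^{n+1}\approx \om(D(a_k,r))$ via (\ref{omDS}). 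Summing over $k$ with bounded overlap yields $\int \hat\mu_{\om,r}^{p/(p-q)}W\,dV\approx \sum_k \hat\mu_{\om,r,q/p}(a_k)^{p/(p-q)}$, identifying (iii) and (iv) for any fixed $r$.

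For (iv)$\Rightarrow$(ii), given $f\in A^p_\om$ I would combine the submean value inequality $|f(z)|^p\lesssim \om(D(z,r))^{-1}\int_{D(z,r)}|f|^p \om\,dV$ (available via Lemma \ref{ApwW}) with a lattice covering to obtain $\int |f|^q d\mu\lesssim \sum_k \bigl(\int_{D(a_k,2r)}|f|^p\om\,dV\bigr)^{q/p}\hat\mu_{\om,r,q/p}(a_k)$. H\"older's inequality with exponents $p/q$ and $p/(p-q)$ together with the bounded overlap of $\{D(a_k,2r)\}$ then gives $\int|f|^q d\mu\lesssim \|f\|^q_{A^p_\om}\|\{\hat\mu_{\om,r,q/p}(a_k)\}\|^q_{l^{p/(p-q)}}$, which yields boundedness. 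To upgrade this to compactness for a bounded sequence $\{f_k\}\subset A^p_\om$ converging to $0$ uniformly on compacta, I would split the lattice sum at $|a_k|=\rho$: the inner part is controlled by $\sup_{|z|\le \rho+2r}|f_k|\to 0$, while the tail is handled by H\"older together with the smallness of $\sum_{|a_k|\ge\rho}\hat\mu_{\om,r,q/p}(a_k)^{p/(p-q)}$ as $\rho\to 1^-$ (guaranteed by (iv)) and the uniform boundedness of $\|f_k\|_{A^p_\om}$.

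The main obstacle is (i)$\Rightarrow$(iv), which I plan to handle through a Rademacher--Khinchin argument built on Lemma \ref{Flambda}. For an $r$-lattice $\{a_k\}$, set $g_k(z)=(1-|a_k|^2)^{t-n/p}\hat\om(a_k)^{-1/p}(1-\langle z,a_k\rangle)^{-t}$ with $t$ large enough to trigger Lemma \ref{Flambda}. For $\lambda=\{\lambda_k\}\in l^p$ and Rademacher functions $\epsilon_k(s)$, the randomised functions $F_s(z)=\sum_k \epsilon_k(s)\lambda_k g_k(z)$ lie in $A^p_\om$ with $\|F_s\|_{A^p_\om}\lesssim \|\lambda\|_{l^p}$ uniformly in $s$. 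Applying (i), integrating over $s$ and invoking Khinchin's inequality converts the randomised $L^q_\mu$-bound into $\int_{\BB_n}\bigl(\sum_k|\lambda_k g_k(z)|^2\bigr)^{q/2}d\mu(z)\lesssim \|\lambda\|^q_{l^p}$. Restricting to the disjoint sub-balls $D(a_k,r/2)$, the trivial inequality $(\sum_j x_j^2)^{q/2}\ge x_k^q$, the lower bound $|g_k(z)|\gtrsim (1-|a_k|^2)^{-n/p}\hat\om(a_k)^{-1/p}$ on $D(a_k,r/2)$ (from (\ref{wDzr})), and (\ref{omDS}) reduce matters to $\sum_k |\lambda_k|^q \hat\mu_{\om,r/2,q/p}(a_k)\lesssim \|\lambda\|^q_{l^p}$. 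Setting $c_k=|\lambda_k|^q$ and applying the duality $(l^{p/q})^*=l^{p/(p-q)}$ then yields (iv). The delicate points are choosing $t$ large enough for Lemma \ref{Flambda}, ensuring $r$ is small enough that $\{D(a_k,r/2)\}$ is pairwise disjoint, and verifying the Fubini--Khinchin swap, all of which should be routine.
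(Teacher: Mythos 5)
Your proposal follows essentially the same route as the paper's proof: the hard implication (i)$\Rightarrow$(iv) is carried out exactly as in the paper (Rademacher randomisation of the atoms from Lemma \ref{Flambda}, Khinchine's inequality, restriction to the balls $D(a_k,\cdot)$, and the duality $(l^{p/q})^\ast=l^{p/(p-q)}$), and the remaining implications are the same computations in a slightly reshuffled cycle --- the paper closes the loop as (i)$\Rightarrow$(iv)$\Rightarrow$(iii)$\Rightarrow$(ii)$\Rightarrow$(i), applying H\"older to the continuous integral $\int_{\BB_n}|f|^q\hat{\mu}_{\om,r}W\,dV$, whereas you prove (iii)$\Leftrightarrow$(iv) by discretisation and apply H\"older to the lattice sum.

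Two technical points in your write-up need repair, though neither is fatal. First, the two-sided comparison $\hat{\mu}_{\om,r}(z)\approx\hat{\mu}_{\om,r}(a_k)$ on $D(a_k,r)$ is false at a fixed radius (take $\mu$ a point mass lying just outside $D(a_k,r)$ but inside $D(z,r)$ for some $z\in D(a_k,r)$): one only has the one-sided inclusions $D(z,r)\subset D(a_k,2r)$ and $D(a_k,r)\subset D(z,2r)$, so the discretisation yields $\int_{\BB_n}\hat{\mu}_{\om,r}^{p/(p-q)}W\,dV\lesssim\sum_k\hat{\mu}_{\om,2r,q/p}(a_k)^{p/(p-q)}$ and a reverse inequality with the radii interchanged. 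This is exactly how the paper's (iv)$\Rightarrow$(iii) proceeds (it bounds $\mu(D(z,r))$ by $\mu(D(a_k,r+\tilde{r}))$ for $z\in D(a_k,\tilde{r})$), and it is the reason the statement carries the ``some or all $r$'' clause; your appeal to ``standard covering arguments to swap radii'' should be made explicit here. Second, the pointwise estimate $|f(z)|^p\lesssim\om(D(z,r))^{-1}\int_{D(z,r)}|f|^p\om\,dV$ is not justified for a general $\om\in\mathcal{D}$: the doubling hypothesis constrains $\hat{\om}$ rather than $\om$ itself, and $\om$ may vanish on large portions of $D(z,r)$. The correct form, which is the one the paper uses in Lemma \ref{hatmu} and in its (iii)$\Rightarrow$(ii), puts the twisted weight $W$ in the integrand: $|f(z)|^p\lesssim\om(D(z,r))^{-1}\int_{D(z,r)}|f|^pW\,dV$, obtained from the unweighted submean value inequality together with $W(w)\approx W(z)$ on $D(z,r)$ and (\ref{wDzr}), (\ref{omDS}). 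After this substitution your bounded-overlap sum is controlled by $\|f\|_{A^p_W}^p\approx\|f\|_{A^p_\om}^p$ via Lemma \ref{ApwW}, and the rest of your (iv)$\Rightarrow$(ii) argument, including the compactness upgrade by splitting the lattice at $|a_k|=\rho$, goes through.
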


\begin{proof}
    $(i) \Rightarrow (iv)$. Assume that $ \mu$ is a $q$-Carleson measure for $A^p_\om$. 
    Consider the function
    $$
    F_t(z)=\sum_{k=1}^\infty\lambda_k r_k(t)f_k(z),\,t\in[0,1],\,z\in \BB_n,
    $$
   where $\lambda=\{\lambda_k\}\in l^p$, $r_k$ are the Rademacher functions and 
   $$
   f_k(z)= {(1-|a_k|^2)^{t-{n\over p}}  \over  \hat{\om}(a_k)^{1\over p} (1-\langle z,a_k\rangle )^t},\,z\in \BB_n, 
   $$
   for some sufficiently large $t>0$. Then $F_t\in A^p_\om$ with $ \|F_t\|_{A^p_\om}\lesssim \|\lambda\|_{l^p}$ by Lemma \ref{Flambda}. 
   Thus, by the assumption, we obtain that
   \begin{align*}
   	\int_{\BB_n} \left| \sum_{k=1}^\infty\lambda_k r_k(t)f_k(z) \right| ^q d\mu(z)\le \|I_d\|^q_{A^p_\om\to L^q_\mu} \|F_t\|^q_{ A^p_\om } \lesssim \|I_d\|^q_{A^p_\om\to L^q_\mu}  \|\lambda \|^q_{l^q}.
   \end{align*}
   Then integrating with respect to $t$ on $[0,1]$, and employing Fubini's theorem and Khinchine's inequality, 
   $$
   \int_{\BB_n}\left( \sum_{k=1}^\infty |\lambda_k|^2 |f_k(z)|^2 \right) ^{q\over  2} d\mu(z) \lesssim \|I_d\|^q_{A^p_\om\to L^q_\mu} \|\lambda \|^q_{l^q}.
   $$
   By Lemma \ref{cover}, we get
    \begin{align*}
    \left( \sum_{k=1}^\infty |\lambda_k|^2 |f_k(z)|^2 \right) ^{q\over  2} &\gtrsim \sum_{k=1}^\infty |\lambda_k|^q |f_k(z)|^q \chi_{D(a_k,4r)}(z)\nonumber\\
    & \approx  
   \sum_{k=1}^\infty |\lambda_k|^q { \chi_{D(a_k,4r)}(z) \over \hat{\om}(a_k)^{q\over p} (1-|a_k|^2)^{{qn\over p}} }.
    	 \end{align*}
    Therefore,
    $$
    \sum_{k=1}^\infty |\lambda_k|^q { \mu(D(a_k,4r)) \over  \left( \hat{\om}(a_k) (1-|a_k|^2)^n \right) ^{q\over p} } \lesssim \|I_d\|^q_{A^p_\om\to L^q_\mu} \|\lambda \|^q_{l^q},
    $$
    which combined with the duality $$\left( l^{p\over q} \right)^\ast =l^{p\over p-q} $$ yields that (iv) holds.
    
     $(iv) \Rightarrow (iii)$. Suppose that $(iv)$ holds.
     By Lemma \ref{cover},
    we can choose $ \tilde{r}\in (0,1-r)$ and $N\ge 1$ such that
     $\BB_n \backslash \varepsilon\BB_n \subset \cup_{k\ge N} D(a_k,\tilde{r})$ for some $0<\varepsilon_0<\varepsilon <1 $.
     Note that $\hat{\mu}_{\om,r} $ is continuous on $\BB_n$.
     Therefore, by (\ref{wDhw}),
      \begin{align*}
    &  \int_{\BB_n} \hat{\mu}_{\om,r}(z) ^{ p \over p-q} W(z) dV(z) \\
     = & \left( \int_{\varepsilon \BB_n}  +\int_{\BB_n \backslash \varepsilon\BB_n} \right) \hat{\mu}_{\om,r}(z) ^{ p \over p-q} W(z) dV(z)\\
      \le & \int_{\varepsilon \BB_n} { \hat{\om} (z) \over 1-|z| } dV(z)+  \sum_{k \ge N} \int_{D(a_k,\tilde{r})} \hat{\mu}_{\om,r}(z) ^{ p \over p-q} { \hat{\om} (z) \over 1-|z| } dV(z) \\
      \lesssim & \hat{\om}(0) + \sum_{k \ge N} { \hat{\om} (a_k) \over 1-|a_k| } \int_{D(a_k,\tilde{r})} \left(  { \mu( D(z, r) ) \over \hat{\om}(z)	(1-|z|)^n } \right) ^{ p \over p-q}  dV(z)\\
       \lesssim & \hat{\om}(0) + \sum_{k \ge N} \left(  { \mu( D(a_k, r+\tilde{r} ) ) \over \left(  \hat{\om}(a_k)	(1-|a_k|)^n \right) ^{q \over p} } \right) ^{ p \over p-q} <\infty,
      \end{align*}
      which means that $(iii)$ holds.
      
        $(iii) \Rightarrow (ii)$. Suppose that $(iii)$ holds.
     Let   $ \{f_k \}$ be a bounded sequence in $A^p_\om $ and converge to $0$ uniformly on any compact subset of $\BB_n$. 
     By the assumption, for any $ \varepsilon >0$, there is $ \varepsilon_0 \in (0,1)$ such that 
     $$
     \int_{\BB_n \backslash \varepsilon_0 \BB_n} \hat{\mu}_{\om,r}(z) ^{ p \over p-q} W(z) dV(z)  <\varepsilon ^{ p \over p-q }.
     $$
     Therefore, using the subharmonic property of $|f_k|^q$, Fubini's theorem, H\"older's inequality and Lemma \ref{ApwW}, we have 
     \begin{align*}
  \| f_k\|^q_{L^q_\mu} = & \int_{\BB_n} |f_k(z)|^q d\mu(z) 
        \lesssim  \int_{\BB_n} { 1 \over (1-|z|)^{n+1} }\int_{D(z,r)} |f_k(w)|^q dV(w) d\mu(z)\\
     \lesssim & \int_{\BB_n} |f_k(w)|^q { \mu (D(w,r)) \over  (1-|w|)^{n+1} } dV(w)\\
     \lesssim &  \int_{\BB_n} |f_k(w)|^q \hat{\mu}_{\om,r}(w) W(w) dV(w)\\
     = & \left( \int_{\varepsilon_0 \BB_n}+\int_{\BB_n \backslash \varepsilon_0 \BB_n} \right) |f_k(w)|^q \hat{\mu}_{\om,r}(w) W(w) dV(w)\\
     \le & \left( \int_{\varepsilon_0 \BB_n} |f_k(w)|^q W(w) dV(w) \right) ^{ q \over p} \left( \int_{\BB_n} \hat{\mu}_{\om,r}(w)^{ p \over p-q} W(w) dV(w) \right) ^{ p-q \over p}\\
     &+ \left( \int_{ \BB_n} |f_k(w)|^q W(w) dV(w) \right) ^{ q \over p} \left( \int_{\BB_n \backslash \varepsilon_0 \BB_n} \hat{\mu}_{\om,r}(w)^{ p \over p-q} W(w) dV(w) \right) ^{ p-q \over p}\\
     \lesssim & \varepsilon \|f_k\|^q_{A^p_\om},
      \end{align*}
     which implies that $(ii)$ holds.
     
     $(ii) \Rightarrow (i)$. It is obvious. The proof is complete.
\end{proof}
	
\section{Main results}
In this section, we prove Theorems \ref{pqbddcom} and \ref{qpcom}. 
To present our results, we introduce some notations first.
The measures $\eta$ and $\sigma_r$ are defined by
$$
\eta=\eta_{\varphi,u}+ \eta_{\psi,u},
$$
$$
\sigma_r = \sigma_{\varphi,r} + \sigma_{\psi,r},
$$
where the measures $\eta_{\varphi,u}$ and $ \sigma_{\varphi,r}$ are defined by
$$
\eta_{\varphi,u} = \left(  |\rho u|^q d\mu \right) \circ \varphi^{-1},
$$
$$
\sigma_{\varphi,r} = \left( \chi_{G_r}|u-v|^q d\mu \right) \circ \varphi^{-1},
$$
and the measures $\eta_{\psi,u},  \sigma^\alpha_\psi$ and $ \sigma_{\psi,r}$ are defined similarly.

The following theorem, encompassing the content of Theorem \ref{pqbddcom}, is the main result of this subsection.

\subsection{The case $p\le q$}

We first investigate some sufficient conditions for $ uC_\varphi-vC_\psi: A^p_\om \to L^q_\mu$ to be bounded and compact.
	
\begin{thm}\label{pqs}
	Let $0<r<1$, $0<p\le q<\infty$ and $\om\in \mathcal{D}$, $\varphi,\psi \in \mathcal{S}(\BB_n)$,  $u,v$ be weights
	and $\mu$ be a positive Borel measure on $\BB_n$. 
	If $ \eta+\sigma_{\varphi,r}$ or $ \eta+\sigma_{\psi,r}$ is a (resp. vanishing) $q$-Carleson measure for $A^p_\om$, then $ uC_\varphi-vC_\psi: A^p_\om \to L^q_\mu$ is bounded (resp. compact).
\end{thm}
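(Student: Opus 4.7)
The plan is to split the integral $\int_{\BB_n}|uf\circ\varphi - vf\circ\psi|^q\,d\mu$ according to $\BB_n = G_r \sqcup (\BB_n\setminus G_r)$ and bound each piece by reducing to a Carleson integral for $A^p_\om$. Off $G_r$ we have $\rho(z)\ge r$, so the triangle inequality gives
\[
|uf\circ\varphi - vf\circ\psi|^q \lesssim r^{-q}\bigl(|\rho u|^q|f\circ\varphi|^q + |\rho v|^q|f\circ\psi|^q\bigr),
\]
and integration reproduces $r^{-q}\int_{\BB_n}|f|^q\,d\eta$, controlled by the Carleson hypothesis on $\eta$ directly by the definition of the pullback measures.

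On $G_r$, assuming $\eta+\sigma_{\psi,r}$ is a $q$-Carleson measure (the other case is symmetric, swapping the roles of $(u,\varphi)$ and $(v,\psi)$ and centring Lemma \ref{fabD} at $\psi(z)$ instead), I would decompose
\[
uf\circ\varphi - vf\circ\psi = u(f\circ\varphi - f\circ\psi) + (u-v)f\circ\psi.
\]
The second summand integrates to $\int|f|^q\,d\sigma_{\psi,r}$, handled directly by the Carleson hypothesis. For the first summand, $\beta(\varphi(z),\psi(z))$ is uniformly bounded on $G_r$, so Lemma \ref{fabD} applied with $a=\varphi(z)$, $b=\psi(z)$ produces the pointwise inequality
\[
|f\circ\varphi(z)-f\circ\psi(z)|^q \lesssim \|f\|_{A^p_\om}^{q-p}\,\frac{\rho(z)^q}{((1-|\varphi(z)|)^n\hat{\om}(\varphi(z)))^{q/p}}\int_{D(\varphi(z),R)}|f|^p W\,dV,
\]
with $\|f\|^{q-p}=1$ when $p=q$.

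Multiplying by $|u(z)|^q$, integrating against $\mu$, and swapping orders by Fubini converts this expression to $\|f\|_{A^p_\om}^{q-p}\int_{\BB_n}|f(w)|^p W(w)\,\Phi(w)\,dV(w)$. On the set $\{z:\beta(\varphi(z),w)<R\}$, Lemma \ref{lem1}(ii),(iii) and \eqref{omDS} yield $(1-|\varphi(z)|)^n\hat{\om}(\varphi(z))\approx\om(D(w,R))$, so
\[
\Phi(w) \approx \frac{\eta_{\varphi,u}(D(w,R))}{\om(D(w,R))^{q/p}}.
\]
Since $\eta_{\varphi,u}\le\eta$ inherits the $q$-Carleson property, Theorem \ref{pqCarleson}(i) (with $R\ge r_0$) gives $\Phi\in L^\infty(\BB_n)$, bounding this piece by $\|f\|_{A^p_\om}^{q-p}\|f\|_{A^p_W}^p\approx\|f\|_{A^p_\om}^q$ via Lemma \ref{ApwW}. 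Combining the three estimates proves boundedness.

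For compactness, I would run the same estimates on a bounded sequence $\{f_k\}\subset A^p_\om$ with $f_k\to 0$ uniformly on compacta and invoke Lemma \ref{cpt}. The $\eta$ and $\sigma_{\psi,r}$ integrals tend to $0$ by the vanishing $q$-Carleson property (Theorem \ref{pqCarleson}(ii)). For the remaining term, $\Phi(w)\to 0$ as $|w|\to 1^-$ by the vanishing version of Theorem \ref{pqCarleson}, so splitting the $w$-integral over $\e\BB_n$ (where $f_k\to 0$ uniformly and $W\Phi$ is bounded) and its complement (where $\Phi$ is small and multiplied by $\|f_k\|_{A^p_W}^p\lesssim 1$) mimics the endgame of Theorem \ref{pqCarleson}(ii) and finishes the argument. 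The main obstacle is executing the Fubini swap and recognizing that the weighted mean which emerges is exactly the normalized pullback mean of $\eta_{\varphi,u}$; once this identification is made, the Carleson hypothesis on $\eta$ supplies precisely what is needed to close the estimate, and the symmetric decomposition handles the assumption involving $\sigma_{\varphi,r}$ instead.
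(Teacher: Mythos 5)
Your proposal is correct and follows essentially the same route as the paper: split over $G_r$ and its complement, use $\rho\ge r$ off $G_r$ to absorb everything into $\eta$, and on $G_r$ combine an algebraic decomposition with Lemma \ref{fabD}, Fubini's theorem, and Theorem \ref{pqCarleson}. The only (harmless) difference is that you write $uf\circ\varphi-vf\circ\psi=u(f\circ\varphi-f\circ\psi)+(u-v)f\circ\psi$ and centre Lemma \ref{fabD} at $\varphi(z)$, producing $\sigma_{\psi,r}$ and $\eta_{\varphi,u}$, whereas the paper uses the mirror decomposition $(u-v)f\circ\varphi+v(f\circ\varphi-f\circ\psi)$ centred at $\psi(z)$; your choice in fact matches the hypothesis $\eta+\sigma_{\psi,r}$ slightly more directly.
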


\begin{proof}
	By symmetry,
	we only need to consider the measure $\nu:= \eta+\sigma_{\psi,r}$. Suppose that $ \nu$ is a  $q$-Carleson measure for $A^p_\om$. Then Theorem \ref{pqCarleson} gives that $\hat{\nu}_{\om,\tilde{r}, {q \over p}} \in L^\infty(\BB_n)$ for some $\tilde{r} \in (r_0,1)$ with $r_0 =r_0(\om)>0$.
 For any function $f\in A^p_\om$, we have
	\begin{align*}
	\|(uC_\varphi-vC_\psi)f\|^q_{L^q_\mu} = & \left( \int_{G_r} + \int_{\BB_n \backslash G_r } \right)
		|u(z)f\circ \varphi(z)- v(z)f\circ \psi(z)|^qd\mu(z)\\
		=: & I+II.
	\end{align*}
	Using Lemma \ref{fabD}, Fubini's theorem, Lemma \ref{lem1} and (\ref{wDhw}),
	\begin{align*}
		A:= & \int_{G_r} |v(z)(f\circ \varphi(z)-f\circ \psi(z))|^q d\mu(z)\\
	\lesssim & \int_{G_r} { \rho(z)^q |v(z)|^q \over   \left(  (1-|\psi(z)|)^{n}\hat{\om}(\psi(z)) \right)^{q \over p } }\int_{D(\psi(z),\tilde{r})}|f(w)|^p W(w) dV(w) d\mu(z)\\
	\le & \int_{\BB_n} |f(w)|^p \int_{\psi^{-1}(D(w, \tilde{r}))} { \rho(z)^q |v(z)|^q \over   \left(  (1-|\psi(z)|)^{n}\hat{\om}(\psi(z)) \right)^{q \over p } } d\mu(z) W(w) dV(w)\\
		\lesssim & \int_{\BB_n} |f(w)|^p { \eta (D(w,\tilde{r})) \over \left(  (1-|w|)^{n}\hat{\om}(w) \right)^{q \over p } } W(w) dV(w)\\
	\lesssim & \int_{\BB_n} |f(w)|^p { \nu(D(w,\tilde{r})) \over \left(  (1-|w|)^{n}\hat{\om}(w) \right)^{q \over p } } W(w) dV(w)\\
	\lesssim &\int_{\BB_n} |f(w)|^p \hat{\nu}_{\om,\tilde{r}, {q\over p}}(w) W(w) dV(w).
		\end{align*}
Therefore, by Lemma \ref{hatmu}, we get
	\begin{align*}
	I \lesssim & \int_{G_r} \left( |(u(z)-v(z))f\circ \varphi(z)|^q +|v(z)(f\circ \varphi(z)-f\circ \psi(z))|^q  \right) d\mu(z)\\
	= & \int_{\BB_n} |f(z)|^q d\sigma_{\varphi,r}(z) + A\\
	\lesssim & \int_{\BB_n} |f(z)|^p \hat{\nu}_{\om,\tilde{r}, {q\over p}}(w) W(w) dV(w).
	\end{align*}
	For $II$, applying Lemma \ref{hatmu} again, we obtain
	\begin{align*}
		II \lesssim & \int_{\BB_n \backslash G_r } (  |u(z)f\circ \varphi(z)|^q + |v(z)f\circ \psi(z)|^q)d\mu(z)\\
		\le & {1 \over r^q} \int_{\BB_n \backslash G_r } \rho(z)(  |u(z)f\circ \varphi(z)|^q + |v(z)f\circ \psi(z)|^q)d\mu(z)\\
		\le & {1 \over r^q} \int_{ \BB_n} |f(z)|^q d\eta(z)\\
 \le & {1 \over r^q} \int_{ \BB_n} |f(z)|^q d\nu(z) \\
		\lesssim & \int_{\BB_n} |f(z)|^p \hat{\nu}_{\om,\tilde{r}, {q\over p}}(w) W(w) dV(w).
	\end{align*}
	Thus, by Lemma \ref{ApwW},
\begin{equation}\label{Tfnu}
		\|(uC_\varphi-vC_\psi)f\|^q_{L^q_\mu} 
		 \lesssim  \int_{\BB_n} |f(z)|^p \hat{\nu}_{\om,\tilde{r}, {q\over p}}(z) W(z) dV(z) 
		 \lesssim   \|\hat{\nu}_{\om,\tilde{r}, {q\over p}}\|_{L^\infty(\BB_n)}<\infty,
\end{equation}
	which means that $ uC_\varphi-vC_\psi: A^p_\om \to L^q_\mu$ is bounded.

Suppose that $ \nu$ is a vanishing $q$-Carleson measure for $A^p_\om$. 
Using Theorem \ref{pqCarleson}, for any $\varepsilon>0$, there is a constant $R\in(0,1)$ such that
$
\hat{\nu}_{\om,\tilde{r}, {q\over p}}(z)<\varepsilon
$
for $|z|>R$.
Let   $ \{f_k \}$ be a bounded sequence in $A^p_\om $ and converge to $0$ uniformly on any compact subset of $\BB_n$. Then by (\ref{Tfnu}) and Lemma \ref{ApwW}, we have
\begin{align*}
	&\lim\sup_{k\to \infty}	\|(uC_\varphi-vC_\psi)f_k\|^q_{L^q_\mu} \\
	\lesssim &	\lim\sup_{k\to \infty} \|f_k\|^{q-p}_{A^p_\om}\left( \int_{R\BB_n} +\int_{\BB_n \backslash R\BB_n } \right) |f_k(z)|^p \hat{\nu}_{\om,\tilde{r}, {q\over p}}(z) W(z) dV(z)\lesssim \varepsilon,
	\end{align*}
	which combined with Lemma \ref{cpt} gives that $ uC_\varphi-vC_\psi: A^p_\om \to L^q_\mu$ is compact.
\end{proof}

Before studying the necessary conditions, we introduce some notations.
For $\varphi,\psi\in \mathcal{S}(\BB_n)$ and $b\in \BB_n$, let
$$
Q_b(z):={1-\langle b,\varphi(z)\rangle  \over 1-\langle b,\psi(z)\rangle },z\in\BB_n.
$$
Given $r\in (0,1)$, $0<q,s<\infty$ and a positive Borel measure $\mu$ on $\BB_n$,  put
$$
R_{s,r,q}(a,b):=\int_{\varphi^{-1}(D(a,r))} |u-vQ_b^s| ^q d\mu
$$
for $a,b\in \BB_n$.
For $a\in \BB_n\backslash \{0\}$, set $ w_1(a):={a\over |a|}$ and extend it to any orthonormal basis $\{w_j(a)\}_{j=1}^n$ for $\CC^n$. For $N>0$ with $N^2(1-|a|)<1$, let
$$
a^{1,N}=\left( 1-N^2\sqrt{1-|a|}\right) w_1(a)
$$
and
$$
a^{j,N}=a^{1,N}+N\sqrt{1-|a|}w_j(a),\,j=2,\cdots,n.
$$
Since 
\begin{equation} \label{aN}
1-|a^{j,N}|^2= N^2(1-|a|)\times
\begin{cases}
	\left(2-N^2(1-|a|)\right) {~\rm if~} j=1\\
		\left(1-N^2(1-|a|)\right) {~\rm if~} j\ne1
	\end{cases},
\end{equation} 
we have $a^{j,N}\in\BB_n ,j=1,\cdots,n$.
In case $a=te_1 $, take $w_j(te_1):=e_j,j=1,\cdots,n$.
For $t\in (0,1)$, $a=te_1$ and $N\ge 1$ with $N^2(1-|a|)<1$,  set 
$$
J_N(a):= \left\lbrace a, a^{j,N^2}, a^{j,N^3} \right\rbrace _{j=1}^n.
$$
From Lemma 3.14 in \cite{cckp}, we see the following result hold.

\begin{lem}\label{muR}
	Let $0<q<\infty$,  $0<r,R<1$, $s>0$, $\varphi,\psi \in \mathcal{S}(\BB_n)$ and $u,v$ be weights.  Put $ \mu:=\eta_{\varphi,u} + \sigma_{\varphi,r}$.
	Then there exist constants $N:=N(s,R)\ge 1$, $t_0:=t_0(s,R,N)\in (0,1)$ and $C:=C(q,s,R,N)>0$ such that
	$$
	\mu(D(te_1,R))\le C \sum_{b\in J_N(te_1)} R_{s,r,q}(te_1,b)
	$$
	for all $t\in (t_0,1)$.
\end{lem}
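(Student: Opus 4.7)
The plan is to obtain a pointwise domination for the integrand and then integrate. Writing $\mu_0$ for the underlying positive Borel measure on $\BB_n$ appearing in the definitions of $\eta_{\varphi,u}$ and $\sigma_{\varphi,r}$, unwinding the pullback formula gives
$$\mu(D(te_1,R)) = \int_{\varphi^{-1}(D(te_1,R))} \Bigl[\rho(z)^q |u(z)|^q + \chi_{G_r}(z)|u(z)-v(z)|^q\Bigr] d\mu_0(z).$$
Since $\rho \le 1$ and $|u-v| \le |u| + |v|$, the bracketed integrand is controlled by $|u(z)|^q + |v(z)|^q$, so the task reduces to dominating $|u(z)|^q + |v(z)|^q$ pointwise (up to a constant) by $\sum_{b \in J_N(te_1)} |u(z) - v(z) Q_b(z)^s|^q$ at each $z \in \varphi^{-1}(D(te_1,R))$.

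The pointwise bound would be proved by a dichotomy argument driven by the geometry of the test points in $J_N(te_1) = \{te_1,\, (te_1)^{j,N^2},\, (te_1)^{j,N^3}\}_{j=1}^n$. Formula \eqref{aN} shows that the perturbed points satisfy $1-|a^{j,N^k}|^2 \sim N^{2k}(1-t)$, so the factors $1-\langle a^{j,N^k},\varphi(z)\rangle$ and $1-\langle a^{j,N^k},\psi(z)\rangle$ live on mutually separated scales parametrised by $N^k$. Consequently $|Q_b(z)|^s$ ranges over a wide logarithmic interval across the $2n+1$ choices of $b$, and at least one $b \in J_N(te_1)$ forces $|u(z) - v(z)Q_b(z)^s|$ to be comparable to $\max\{|u(z)|, |v(z)|\}$: when $|v(z)Q_b(z)^s|$ is much smaller than $|u(z)|$ we extract $|u|$, and when it is much larger we extract $|v Q_b^s|\gtrsim |v|$. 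The inclusion \eqref{De1r} is used to confine $\varphi(z)$ to a Carleson tube over $e_1$, quantifying $|1-\langle a^{j,N^k},\varphi(z)\rangle|$ in terms of $(1-t)$ and $N^k$.

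The core obstacle, which is precisely the content of Lemma~3.14 in \cite{cckp}, is calibrating these scales: one must first take $N=N(s,R)$ large enough that the dichotomy above produces genuine comparison constants, and then $t_0$ close enough to $1$ that every point $a^{j,N^k}$ still lies in $\BB_n$ and $\varphi(z) \in D(te_1,R)$ implies the required scale comparisons between the inner products. Since this calibration is purely geometric on $\BB_n$ and is insensitive to the doubling weight $\omega$ or to the measure $\mu_0$, the argument in \cite{cckp} transfers verbatim. Integrating the pointwise estimate against $d\mu_0$ over $\varphi^{-1}(D(te_1,R))$ then yields the claimed inequality, with $C$ absorbing the finite number of constants produced by the dichotomy analysis.
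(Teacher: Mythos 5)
Your reduction step is where the argument breaks. After discarding the factors $\rho(z)^q$ and $\chi_{G_r}(z)$ via the crude bounds $\rho\le 1$ and $|u-v|\le|u|+|v|$, you are left needing the pointwise inequality $|u(z)|^q+|v(z)|^q\lesssim\sum_{b\in J_N(te_1)}|u(z)-v(z)Q_b(z)^s|^q$, and this is false: take any $z$ with $\varphi(z)=\psi(z)\in D(te_1,R)$ and $u(z)=v(z)\neq 0$; then $Q_b(z)\equiv 1$, every term on the right vanishes, but the left side does not. Your dichotomy ("extract $|u|$ when $|vQ_b^s|\ll|u|$, extract $|v|$ when $|vQ_b^s|\gg|u|$") silently skips exactly this middle regime, where $|vQ_b^s|\approx|u|$ for every $b$ and near-cancellation occurs. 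The lemma survives precisely because its integrand is $\rho(z)^q|u(z)|^q+\chi_{G_r}(z)|u(z)-v(z)|^q$, which also vanishes in that regime; the dampening factors are not decoration but the whole point, and the correct pointwise estimate one must prove (and which Lemma~3.14 of \cite{cckp} in effect establishes) is
$$\rho(z)^q|u(z)|^q+\chi_{G_r}(z)|u(z)-v(z)|^q\lesssim\sum_{b\in J_N(te_1)}\left|u(z)-v(z)Q_b(z)^s\right|^q,\qquad z\in\varphi^{-1}(D(te_1,R)),$$
whose case analysis must track $\rho(z)$ (splitting on $z\in G_r$ versus $z\notin G_r$) rather than throw it away at the outset.

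For comparison: the paper offers no proof at all here, simply invoking Lemma~3.14 of \cite{cckp}, and that citation is legitimate because the statement involves only $\mu$, $\varphi$, $\psi$, $u$, $v$ and the geometry of $\BB_n$ --- the doubling weight $\om$ never enters, so your observation that the result is weight-insensitive is correct. But your sketch of \emph{what} that cited lemma proves is the wrong inequality, and if you carried out your plan as written it would fail. To repair it, keep the integrand intact, and run the scale-separation argument on the two pieces separately: roughly, $\max_b|1-Q_b(z)^s|\gtrsim\rho(z)$ handles the $\rho^q|u|^q$ term, and on $G_r$ one uses $|u-v|\le|u-vQ_b^s|+|v|\,|1-Q_b^s|$ together with $|1-Q_b^s|\lesssim\rho(z)$ to absorb the remainder into the first piece.
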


Next, we give some necessary conditions for $ uC_\varphi-vC_\psi: A^p_\om \to L^q_\mu$ to be bounded and compact.

\begin{thm}\label{pqn}
	Let $0<p\le q<\infty$ and $\om\in \mathcal{D}$. Let $\varphi,\psi \in \mathcal{S}(\BB_n)$, $u,v$ be weights
	 and $\mu$ be a positive Borel measure on $\BB_n$. 
	If $ uC_\varphi-vC_\psi: A^p_\om \to L^q_\mu$ is bounded (resp. compact), then the measures $ \eta_{\varphi,u} +\sigma_{\varphi,r}$ and $ \eta_{\psi,u} +\sigma_{\psi,r}$ are (resp. vanishing)  $q$-Carleson measure for $A^p_\om$.
\end{thm}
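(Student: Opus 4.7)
The plan is to invoke the Carleson-measure machinery developed earlier in the paper, and then extract the geometric information via test-function testing. By symmetry between the two pairs $(\varphi,u)$ and $(\psi,v)$ it suffices to prove the claim for the single measure $\mu_0 := \eta_{\varphi,u} + \sigma_{\varphi,r}$. Write $T := uC_\varphi - vC_\psi$. Theorem~\ref{pqCarleson}, combined with (\ref{wDhw}), reduces the boundedness assertion to the uniform estimate
$$\mu_0(D(a,R)) \,\lesssim\, \|T\|^q\,\bigl(\hat\om(a)(1-|a|)^n\bigr)^{q/p}, \qquad a \in \BB_n,$$
for some fixed $R$ greater than the threshold $r_0(\om)$ provided by Theorem~\ref{pqCarleson}, and the compactness assertion to the corresponding $o(1)$-version as $|a| \to 1$.

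Using the rotational covariance of the Bergman balls together with the radiality of $\om$, I may assume $a = te_1$ with $t$ close to $1$; the range where $|a|$ is bounded away from $1$ is handled trivially. Lemma~\ref{muR} then yields parameters $s, N \ge 1$ and $t_0 \in (0,1)$ for which
$$\mu_0(D(te_1, R)) \,\le\, C \sum_{b \in J_N(te_1)} R_{s,r,q}(te_1,b), \qquad t \in (t_0, 1),$$
and the problem is thereby reduced to bounding each $R_{s,r,q}(a,b)$ with $b \in J_N(a)$ in terms of $\bigl(\hat\om(a)(1-|a|)^n\bigr)^{q/p}$ and $\|T\|^q$.

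To bound $R_{s,r,q}(a,b)$ I will test $T$ on
$$f_b(z) := \frac{(1-|b|^2)^{s-n/p}}{\hat\om(b)^{1/p}(1-\langle z,b\rangle)^s},$$
whose $A^p_\om$-norm is $\lesssim 1$ once $s$ is taken sufficiently large, by (\ref{Kas}) and Lemma~\ref{whatw}. A direct algebraic manipulation gives the factorisation
$$Tf_b(z) = \frac{(1-|b|^2)^{s-n/p}}{\hat\om(b)^{1/p}\bigl(1-\langle\varphi(z),b\rangle\bigr)^s}\,\bigl[u(z) - v(z)Q_b(z)^s\bigr].$$
For $\varphi(z) \in D(a,r)$ the estimate (\ref{wDzr}) lets me replace $|1-\langle\varphi(z),b\rangle|$ by $|1-\langle a,b\rangle|$. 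Restricting $\int_{\BB_n}|Tf_b|^q\,d\mu \le \|T\|^q\|f_b\|_{A^p_\om}^q$ to $\varphi^{-1}(D(a,r))$ and rearranging therefore produces
$$R_{s,r,q}(a,b) \,\lesssim\, \frac{\hat\om(b)^{q/p}\,|1-\langle a,b\rangle|^{qs}}{(1-|b|^2)^{q(s-n/p)}}\,\|T\|^q.$$
Since (\ref{aN}) forces $\beta(a,b) \le C(N)$ for every $b \in J_N(te_1)$, a further application of (\ref{wDzr}) together with Lemma~\ref{lem1}(ii) collapses the geometric factor on the right to $(\hat\om(a)(1-|a|)^n)^{q/p}$, with constants depending only on $N, s, \om, r$. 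Summing over the finite set $J_N(te_1)$ completes the boundedness proof; the compactness version follows from the same chain with $\|T\|^q$ replaced by $\|Tf_b\|_{L^q_\mu}^q$, since $f_b \to 0$ uniformly on compacta as $|b|\to 1$ and Lemma~\ref{cpt} then gives $\|Tf_b\|_{L^q_\mu} \to 0$.

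The hard part will be the final geometric collapse: showing that the four scales $(1-|b|)$, $|1-\langle a,b\rangle|$, $|1-\langle\varphi(z),b\rangle|$ and $\hat\om(b)$ all reduce to the single scale $(1-|a|)$, $\hat\om(a)$ uniformly for $b$ in the boundary cluster $J_N(a)$, together with verifying that the powers of $s$ in numerator and denominator cancel correctly. This is what forces $s$ to be chosen large relative to $n$, $p$ and $\om$, and is also the point where the specific structure of the points $a^{j,N^k}$ supplied by Lemma~\ref{muR} is essential rather than incidental.
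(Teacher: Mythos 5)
Your proposal follows essentially the same route as the paper: reduce via Theorem \ref{pqCarleson} and (\ref{wDhw}) to a uniform bound on $\mu_0(D(a,R))$, use Lemma \ref{muR} to dominate this by the quantities $R_{s,r,q}(a,b)$ for $b\in J_N(a)$, and control each of those by testing $uC_\varphi-vC_\psi$ on the normalized kernels $K_b^s/\|K_b^s\|_{A^p_\om}$, with the unitary rotation and the comparability of the scales $1-|b|$, $|1-\langle a,b\rangle|$, $\hat\om(b)$ with $1-|a|$, $\hat\om(a)$ handled exactly as in the paper. The one step you dismiss as trivial --- the region where $|a|$ stays away from $1$ --- is not quite automatic (the preimage $\varphi^{-1}(\delta_1\overline{\BB_n})$ need not be $\mu$-finite a priori), and the paper settles it by testing the operator on the identity map and the constant $1$ via the pointwise inequality $\rho\,(|u|+|v|)\lesssim |(uC_\varphi-vC_\psi)(id)|+|(uC_\varphi-vC_\psi)(1)|$; otherwise your argument matches the paper's.
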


\begin{proof} Write $\nu:= \eta_{\varphi,u} +\sigma_{\varphi,r}$.  Let $0<\delta<\infty$. Fix numbers $N:=N(s,R)$ and $t_0:=t_0(s,R,N)$ provided by Lemma 4.2,
and choosing $\delta_1=\delta_1(t_0)\in(0,1)$ such that
$$\bigcup_{z\in t_0\overline{\BB_n}}D(z,\delta)\subset \delta_1\overline{\BB_n}.$$
Since
\begin{equation} 
\begin{split}
		\rho|u|\le &
	\rho(|u|+|v|)
	  \leq \frac{ (|\varphi-\psi|(|u|+|v|)  }{ 1-\delta_1  } 
	  \\
	&\leq \frac{ 2|u\varphi-v\psi|+|u-v|(|\varphi|+|\psi|) }{ 1-\delta_1    }   \\
&\leq \frac{ 2|(uC_\varphi-vC_\psi)(id)|+2|(uC_\varphi-vC_\psi)(1)| }{ 1-\delta_1    },  \nonumber
	\end{split}
\end{equation}
so that 
\begin{equation}\nonumber
\nu(\delta_1\overline{\BB_n})\lesssim \|uC_\varphi-vC_\psi)(id)\|_{L^q_\mu}^q+\|uC_\varphi-vC_\psi)(1)\|_{L^q_\mu}^q.
\end{equation}
By  Lemma \ref{whatw},  there exists a constant $r_1>0$ such that
$$\inf_{z\in t_0\overline{\BB_n}}\hat\omega(z)>r_1.$$
This, together with (\ref{omDS}), yields
\begin{equation} \begin{split}\label{nuEQ}
	\sup_{z\in t_0\overline{\BB_n}}\hat{\nu}_{\om,R,{q\over p}}(z)
	& \leq \frac{\nu(\delta_1\overline{\BB_n})}{r_1^{q\over p}(1-t_0^2)^{qn\over p} } \\
 &\lesssim  \|uC_\varphi-vC_\psi)(id)\|_{L^q_\mu}^q+\|uC_\varphi-vC_\psi)(1)\|_{L^q_\mu}^q.
\end{split} \end{equation}

We now turn to the estimate  $\hat{\nu}_{\om,R,{q\over p}}$  for  $\BB_n \backslash t_0\overline{\BB_n}$. Consider the test function
	$$
	f_{b,s}={ K_b^s \over \|K_b^s\|_{A^p_\om} },b\in J_N(te_1),
	$$
	where $s>{ \lambda(\om)+n \over p }$ for some $\lambda(\om)>0$.
	Then by (\ref{Kas}) and (\ref{wDzr}),
	\begin{align*}
	\|(uC_\varphi-vC_\psi)f_{b,s}\|^q_{L^q_\mu} = & { 1 \over \|K_b^s\|_{A^p_\om}^q } \int_{\BB_n} \left| { u \over (1- \langle b,\varphi\rangle )^s} - { v \over (1- \langle b,\psi\rangle )^s }   \right|^q d\mu \\
	\gtrsim& { 1 \over (1-t)^{-qs+{qn \over p}}\hat{\om}(t)^{q\over p} } \int_{\varphi^{-1}(D(te_1,R))} { |u-vQ_b^s|^q \over |1- \langle b,\varphi\rangle|^{qs} } d\mu
	\\		
	\approx & { 1\over \left( (1-t)^{n}\hat{\om}(t)\right) ^{q\over p}} \int_{\varphi^{-1}(D(te_1,R))}  |u-vQ_b^s|^q  d\mu,
		\end{align*}
		for some $R\in(0,1)$.
	Therefore, using Lemma \ref{muR},
\begin{equation}\nonumber
	\begin{aligned}
		{ \nu(D(te_1,R)) \over \left( (1-t)^{n}\hat{\om}(t)\right) ^{q\over p}} \lesssim
	&	\sum_{b\in J_N(te_1)}	\|(uC_\varphi-vC_\psi)f_{b,s}\|^q_{L^q_\mu}
		\end{aligned}
	\end{equation}
	for all $t\in(t_0,1)$, 	which combined with Lemma \ref{lem1} and (\ref{aN}), we can find a constant $C>0$ such that
\begin{equation}\label{nue}
	\hat{\nu}_{\om,R,{q\over p}}(te_1)\lesssim { \nu(D(te_1,R)) \over \left( (1-t)^{n}\hat{\om}(t)\right) ^{q\over p}} 	\lesssim  \sup_{1-|a|\leq C(1-|t|)}\|(uC_\varphi-vC_\psi)f_{a,s}\|_{ L^q_\mu}^q 
	\end{equation}
	for all $t\in (t_0,1)$. For any $z\in\BB_n \backslash t_0\overline{\BB_n}$, we have $|z|\in(t_0,1)$.
Suppose $\mathcal{U}$ is the unitary operator on $\CC^n$, which maps $z$ to $|z|e_1$. Denote  $$\nu_{\mathcal{U}}:= \eta_{\mathcal{U}\circ\varphi,u} +\sigma_{\mathcal{U}\circ\varphi,r}.$$  Noting 
$$\widehat{\nu_\mathcal{U}}_{\om,R,{q\over p}}(z)=\hat{\nu}_{\om,R,{q\over p}}(\mathcal{U}^*z)$$
for all $z\in \BB_n$, and 
$$(uC_{\mathcal{U}\circ\varphi}-vC_{\mathcal{U}\circ\psi})f_{a,s}=(uC_\varphi-vC_\psi)f_{\mathcal{U}^*a,s}  $$
for all $a\in \BB_n$, where $\mathcal{U}^*=\mathcal{U}^{-1}.$ 
So, applying (\ref{nue}) with $\mathcal{U}\circ\varphi$ and $\mathcal{U}\circ\psi$ in place of $\varphi$ and $\psi$, we obtain
\begin{equation} \label{nuom}
	\hat{\nu}_{\om,R,{q\over p}}(z)=\widehat{\nu_\mathcal{U}}_{\om,R,{q\over p}}(|z|e_1)	\lesssim  \sup_{1-|a|\leq C(1-|z|)}\|(uC_\varphi-vC_\psi)f_{a,s}\|_{ L^q_\mu}^q.
	\end{equation}
	for all $z\in\BB_n \backslash t_0\overline{\BB_n}$. Using this and  (\ref{nuEQ}), 	we obtain that $ \hat{\nu}_{\om,R,{q\over p}}\in L^\infty(\BB_n)$. Then Theorem \ref{pqCarleson} yields that $\nu$ is a  $q$-Carleson measure for $A^p_\om$.
	
	If $ uC_\varphi-vC_\psi: A^p_\om \to L^q_\mu$ is  compact, then $\|(uC_\varphi-vC_\psi)f_{b,s}\|_{L^q_\mu} \to 0$ as $|b|\to 1$, where $s>{ \lambda(\om)+n \over p }$ for some $\lambda(\om)>0$. By Lemma \ref{lem1} and (\ref{nuom}), we have $\hat{\nu}_{\om,R,{q\over p}}(z) \to 0$ as $|z| \to 1$, which deduces that $\nu$ is vanishing $q$-Carleson measure for $A^p_\om$ by Theorem \ref{pqCarleson}.
	
	By symmetry,
	we  obtain that $ \eta_{\psi,v} +\sigma_{\psi,r}$ is also a (vanishing) $q$-Carleson measure for $A^p_\om$.
\end{proof}

From Theorems \ref{pqs} and \ref{pqn}, we obtain the following corollary, which contains Theorem \ref{pqbddcom}.

\begin{cor}\label{pqm}
	Let $0<p\le q<\infty$, $0<r<1$,  $\om\in \mathcal{D}$, $\varphi,\psi \in \mathcal{S}(\BB_n)$, $u,v$ be weights
	 and $\mu$ be a positive Borel measure on $\BB_n$. 
	Then the following statements are equivalent:
	\begin{enumerate}
		\item[(i)] $ uC_\varphi-vC_\psi: A^p_\om \to L^q_\mu$ is bounded (resp. compact);
		\item[(ii)] $\eta+ \sigma_{\varphi,r}$ or $\eta+ \sigma_{\psi,r}$ is (resp. vanishing) $q$-Carleson measure for $A^p_\om$;
		\item[(iii)] $\eta+ \sigma_{\varphi,r}$ and $\eta+ \sigma_{\psi,r}$ are (resp. vanishing) $q$-Carleson measure for $A^p_\om$.
	\end{enumerate}
\end{cor}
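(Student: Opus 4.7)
The plan is to obtain the three-way equivalence as a direct corollary of Theorems \ref{pqs} and \ref{pqn}, arranged in the cyclic chain $(\mathrm{iii}) \Rightarrow (\mathrm{ii}) \Rightarrow (\mathrm{i}) \Rightarrow (\mathrm{iii})$. The first link $(\mathrm{iii}) \Rightarrow (\mathrm{ii})$ is immediate, since (iii) is literally the conjunction of the two conditions whose disjunction appears in (ii). The link $(\mathrm{ii}) \Rightarrow (\mathrm{i})$ is exactly the content of Theorem \ref{pqs}, applied to whichever of $\eta+\sigma_{\varphi,r}$ or $\eta+\sigma_{\psi,r}$ happens to be Carleson, and it functions equally well in the bounded and the compact version.

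The substantive step is $(\mathrm{i}) \Rightarrow (\mathrm{iii})$. Assuming $uC_\varphi - vC_\psi : A^p_\om \to L^q_\mu$ is bounded (resp. compact), Theorem \ref{pqn} delivers that both $\eta_{\varphi,u} + \sigma_{\varphi,r}$ and $\eta_{\psi,v} + \sigma_{\psi,r}$ are (resp. vanishing) $q$-Carleson measures for $A^p_\om$. The class of (vanishing) $q$-Carleson measures is visibly a cone, closed under finite sums and under passage to a dominated positive measure; this follows at once from $\int_{\BB_n} |f|^q \, d\mu' \le \int_{\BB_n} |f|^q \, d\mu$ whenever $\mu' \le \mu$, combined with Lemma \ref{cpt} in the vanishing case. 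Extracting each summand $\eta_{\varphi,u}, \eta_{\psi,v}, \sigma_{\varphi,r}, \sigma_{\psi,r}$ as (vanishing) $q$-Carleson and then recombining, I obtain that $\eta = \eta_{\varphi,u} + \eta_{\psi,v}$ is (vanishing) $q$-Carleson, and adding back either $\sigma_{\varphi,r}$ or $\sigma_{\psi,r}$ preserves this, which is exactly statement (iii).

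No genuine obstacle arises here: the analytic content has already been packed into Theorems \ref{pqs} and \ref{pqn}, so the corollary amounts to bookkeeping. The one point that deserves a moment of care is remembering that $\eta$ already contains both pullback pieces, one via $\varphi$ and one via $\psi$, so that after reassembly one genuinely recovers $\eta+\sigma_{\varphi,r}$ and $\eta+\sigma_{\psi,r}$ rather than merely proper sub-measures.
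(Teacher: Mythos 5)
Your proposal is correct and is essentially the argument the paper intends: the paper derives Corollary \ref{pqm} directly from Theorems \ref{pqs} and \ref{pqn} without further comment, and your cyclic chain $(\mathrm{iii})\Rightarrow(\mathrm{ii})\Rightarrow(\mathrm{i})\Rightarrow(\mathrm{iii})$, together with the observation that (vanishing) $q$-Carleson measures are closed under finite sums and domination, is exactly the implicit bookkeeping needed to pass from the conclusion of Theorem \ref{pqn} (about $\eta_{\varphi,u}+\sigma_{\varphi,r}$ and $\eta_{\psi,v}+\sigma_{\psi,r}$) to statement (iii).
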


\subsection{The case $q<p$}

 We first explore the sufficient condition for $ uC_\varphi-vC_\psi: A^p_\om \to L^q_\mu$ to be compact.

\begin{thm}\label{qps}
	Let $0<q<p<\infty$, $0<r<1$,  $\om\in \mathcal{D}$, $\varphi,\psi \in \mathcal{S}(\BB_n)$, $u,v$ be weights
	and $\mu$ be a positive Borel measure on $\BB_n$. If $\eta+ \sigma_{\varphi,r}$ or $\eta+ \sigma_{\psi,r}$ is $q$-Carleson measure for $A^p_\om$, then $ uC_\varphi-vC_\psi: A^p_\om \to L^q_\mu$ is  compact.
\end{thm}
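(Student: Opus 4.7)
The plan is to mirror the compactness half of the argument used for Theorem~\ref{pqs}, swapping the $p\le q$ tools (Lemmas \ref{fabD} and \ref{hatmu}) for their $q<p$ counterparts, and then to invoke Theorem~\ref{qpCarleson} to upgrade the $q$-Carleson hypothesis to a vanishing one. By symmetry, I assume that $\nu := \eta + \sigma_{\psi,r}$ is a $q$-Carleson measure for $A^p_\om$. Theorem~\ref{qpCarleson} then delivers at once that $\nu$ is in fact \emph{vanishing} $q$-Carleson and that $\hat{\nu}_{\om,\tilde{r}} \in L^{p/(p-q)}_W$ for every admissible $\tilde{r}$. By Lemma~\ref{cpt}, it suffices to show that $\|(uC_\varphi - vC_\psi)f_k\|_{L^q_\mu}\to 0$ whenever $\{f_k\}\subset A^p_\om$ is bounded and converges to zero uniformly on compact subsets of $\BB_n$.

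I split $\|(uC_\varphi - vC_\psi)f_k\|^q_{L^q_\mu}$ as $I_k+II_k$ over $G_r$ and its complement. On $\BB_n\setminus G_r$, since $\rho\ge r$, one gets $II_k \le r^{-q}\int_{\BB_n}|f_k|^q d\eta$. On $G_r$, the triangle inequality yields $I_k \lesssim \int |f_k|^q d\sigma_{\varphi,r} + A_k$, where $A_k$ collects the contribution of $|v|^q|f_k\circ\varphi - f_k\circ\psi|^q$. The crucial substitution is that, because $A^p_\om\subset A^q_\om$ continuously when $q<p$ (Hölder in $\om\, dV$), I may apply Lemma~\ref{fabD} with the exponent $q$ playing the role of both $p$ and $q$; combined with Fubini and the Bergman-metric distortion estimates (\ref{wDzr}) and (\ref{wDhw}), this yields $A_k \lesssim \int_{\BB_n}|f_k(w)|^q\hat{\eta}_{\om,\tilde{r}}(w)W(w)\,dV(w)$. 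The remaining term $\int|f_k|^q d\sigma_{\varphi,r}$ is bounded, via subharmonicity of $|f_k|^q$ followed by Fubini, by $\int|f_k|^q\widehat{(\sigma_{\varphi,r})}_{\om,\tilde{r}}\,W\,dV$; then the radius-enlargement observation that on $G_r$ one has $\psi(z)\in D(\varphi(z),\tanh^{-1}r)$ (so $\sigma_{\varphi,r}(D(\cdot,\tilde{r}))\lesssim \sigma_{\psi,r}(D(\cdot,\tilde{r}'))$ for a slightly enlarged $\tilde{r}'$) produces the unified estimate
$$\|(uC_\varphi - vC_\psi)f_k\|^q_{L^q_\mu} \lesssim \int_{\BB_n}|f_k(z)|^q\hat{\nu}_{\om,\tilde{r}'}(z)\,W(z)\,dV(z).$$

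From here I copy the endgame of the implication $(iii)\Rightarrow(ii)$ in Theorem~\ref{qpCarleson}. Given $\varepsilon>0$, I choose $R\in(0,1)$ so that $\int_{\BB_n\setminus R\BB_n}\hat{\nu}_{\om,\tilde{r}'}^{p/(p-q)}W\,dV <\varepsilon^{p/(p-q)}$, split the right-hand integral over $R\BB_n$ and its complement, and apply Hölder with conjugate exponents $p/q$ and $p/(p-q)$ to the tail. Uniform convergence of $\{f_k\}$ on the compact set $\overline{R\BB_n}$ sends the first piece to zero as $k\to\infty$, while boundedness of $\{\|f_k\|_{A^p_\om}\}$ together with Lemma~\ref{ApwW} controls the tail by a constant multiple of $\varepsilon$. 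Since $\varepsilon$ was arbitrary, Lemma~\ref{cpt} delivers the compactness.

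The main obstacle I anticipate is the comparison of $\sigma_{\varphi,r}$ with the hypothesized measure $\nu=\eta+\sigma_{\psi,r}$, since the latter does not directly dominate the former. The radius-enlargement observation above is the required fix; it depends only on the Bergman-closeness of $\varphi$ and $\psi$ on $G_r$ and is implicit in the proof of Theorem~\ref{pqs}, so I expect it to transfer verbatim to the $q<p$ regime. The slight payoff of working in this regime is that one avoids the $\|f\|^{q-p}_{A^p_\om}$ factor that Lemma~\ref{hatmu} introduces when $p\le q$, which is precisely what allows the estimate above to terminate with an integral in $|f_k|^q$ rather than $|f_k|^p$.
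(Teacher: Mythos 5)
Your proposal is correct and follows essentially the same route as the paper: the paper likewise reduces to the pointwise estimate $\|(uC_\varphi-vC_\psi)f\|^q_{L^q_\mu}\lesssim\int_{\BB_n}|f|^q\,\hat{\nu}_{\om,r}\,W\,dV$ by invoking its inequality (\ref{Tfnu}) with $p=q$ (exactly your observation that the $\|f\|^{q-p}_{A^p_\om}$ factor disappears in that case), then applies Theorem \ref{qpCarleson} to get $\hat{\nu}_{\om,r}\in L^{p/(p-q)}_W$ and finishes with the same split--and--H\"older endgame. Your explicit treatment of the $\sigma_{\varphi,r}$ versus $\sigma_{\psi,r}$ comparison via radius enlargement is a point the paper leaves implicit, but it is the same argument.
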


\begin{proof}
	Suppose that $\nu:=\eta+ \sigma_{\varphi,r}$  is $q$-Carleson measure for $A^p_\om$. By Theorem \ref{qpCarleson}, there is $r_0=r_0(\om)\in (0,1)$ such that $ \hat{\nu}_{\om,r} \in L^{p\over p-q}_W$ for some $r\in (r_0,1)$.
	For $f\in A^p_\om$, employing (\ref{Tfnu}) with $p=q$, H\"older's inequality and Lemma \ref{ApwW}, we have
	\begin{align*}
   \|(uC_\varphi-vC_\psi)f\|^q_{L^q_\mu} \lesssim & \int_{\BB_n} |f(z)|^q \hat{\nu}_{\om,r}(z) W(z) dV(z)\\
   \le & \left( \int_{ \BB_n} |f(z)|^pW(z)dV(z)\right) ^{q\over p} \left( \int_{ \BB_n} \hat{\nu}_{\om,r}(z)^{p\over p-q}W(z) dV(z) \right) ^{p-q\over p}\\
   \lesssim & \|f\|^q_{A^p_\om}\|\hat{\nu}_{\om,r}\|_{L_W^{p\over p-q}},
	\end{align*}
	which gives that $ uC_\varphi-vC_\psi: A^p_\om \to L^q_\mu$ is bounded.
	Let  $ \{f_k \}$ be a bounded sequence in $A^p_\om $ and converge to $0$ uniformly on any compact subset of $\BB_n$. 
	For any $\varepsilon>0$, there is $ R\in (0,1)$ such that
	$$
	\int_{\BB_n \backslash R\BB_n } \hat{\nu}_{\om,R}(z)^{p\over p-q}W(z) dV(z)<\varepsilon^{p \over p-q}.
	$$
Similar to the above,
	\begin{align*}
		&\lim\sup_{k\to\infty}\|(uC_\varphi-vC_\psi)f_k\|^q_{L^q_\mu} \\
\lesssim &	\lim\sup_{k\to\infty}\left(  \int_{\BB_n}+\int_{\BB_n \backslash R\BB_n } \right) |f_k(z)|^q \hat{\nu}_{\om,r}(z) W(z) dV(z)\\
		\le & \lim\sup_{k\to\infty} \left(\int_{R\BB_n} |f_k(z)|^pW(z)dV(z) \right) ^{q\over p} \|\hat{\nu}_{\om,R}\|_{L_W^{p\over p-q}}\\
		& +\sup_{k\ge 1} \|f_k\|^q_{A^p_\om}\left( \int_{\BB_n \backslash R\BB_n } \hat{\nu}_{\om,r}(z) W(z) dV(z) \right) ^{p-q \over p}\lesssim \varepsilon,
	\end{align*}
	which implies that $ uC_\varphi-vC_\psi: A^p_\om \to L^q_\mu$ is  compact by the arbitrariness of $\varepsilon$ and Lemma \ref{cpt}. Similarly, if $\eta+ \sigma_{\psi,r}$ is $q$-Carleson measure for $A^p_\om$, we can obtain the same result.
\end{proof}

The following decomposition lemma is crucial for studying the necessary condition  for  $ uC_\varphi-vC_\psi: A^p_\om \to L^q_\mu$ to be bounded.

\begin{lem}\label{decomlem}
	Let $0<\delta<1$, $N>0$, $r=r(N)\in \left( 0,\tanh^{-1}\sqrt{1-{ N^2 \over 2(N^2+1)^2 } }\right) $, $M=M(n,\delta,r,N)$ be a positive integer and $\{a_k\}$ be a $\delta$-separated sequence in $\BB_n$ with $ N^2(1-|a_k|)<{1\over 2}$ for all $k$. 
	If any collection of more than $M$ of the pseudohyperbolic balls $ \left\lbrace  D(a_k^{1,N}, r),\cdots, D(a_k^{n,N}, r) \right\rbrace_{k=1}^\infty $ contains no point in common, then $\{a_k\}$ is a union of $M+1$ separated sequences.
\end{lem}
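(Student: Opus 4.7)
The strategy is a standard graph-coloring argument. I construct an auxiliary graph $G$ on the vertex set $\{a_k\}$, bound its maximum degree using the multiplicity hypothesis, and conclude via greedy coloring that the vertex set decomposes into $M+1$ color classes, each of which is a separated sequence.

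The principal geometric ingredient is the following estimate: there should exist $\eta=\eta(n,\delta,r,N)>0$ such that for any $a,b\in\BB_n$ with $N^2(1-|a|),\,N^2(1-|b|)<\tfrac12$,
\[
\beta(a,b)<\eta\quad\Longrightarrow\quad \beta\bigl(a^{1,N},b^{1,N}\bigr)<r,
\]
i.e., $a^{1,N}\in D(b^{1,N},r)$. This is a quantitative uniform-continuity statement for the auxiliary map $a\mapsto a^{1,N}=(1-N^2\sqrt{1-|a|})w_1(a)$ in the pseudohyperbolic metric. The key observations are that $\beta(a,b)<\eta$ forces $|a|\approx|b|$ (hence $\sqrt{1-|a|}\approx\sqrt{1-|b|}$) and the unit vectors $w_1(a),w_1(b)$ to differ by an amount controlled by $\sqrt{1-|a|}\cdot\eta$; combined with (\ref{aN}), which yields $1-|a^{1,N}|^2\approx N^2\sqrt{1-|a|}$, a direct estimate on $|a^{1,N}-b^{1,N}|/|1-\langle a^{1,N},b^{1,N}\rangle|$ gives the claim. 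The explicit upper bound $r<\tanh^{-1}\sqrt{1-N^2/(2(N^2+1)^2)}$ in the hypothesis enters to guarantee that the balls $D(a^{j,N},r)$ for distinct indices $j$ attached to a common base point are pairwise disjoint, so that the multiplicity hypothesis retains its full force.

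With this $\eta$ in hand, I define $G$ by declaring $a_i\sim a_k$ iff $i\ne k$ and $\beta(a_i,a_k)<\eta$, and I claim that every vertex has degree at most $M$ in $G$. Suppose for contradiction that some vertex $a_{k_0}$ has $M+1$ neighbors $a_{k_1},\ldots,a_{k_{M+1}}$. Applying the geometric estimate to each pair $(a_{k_0},a_{k_\ell})$ yields $a_{k_0}^{1,N}\in D(a_{k_\ell}^{1,N},r)$ for $\ell=1,\ldots,M+1$, so the $M+1$ distinct balls $D(a_{k_1}^{1,N},r),\ldots,D(a_{k_{M+1}}^{1,N},r)$ share the common point $a_{k_0}^{1,N}$. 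Since $M+1>M$, this contradicts the hypothesis that no collection of more than $M$ balls in $\{D(a_k^{j,N},r)\}_{k,j}$ has a point in common. Therefore the maximum degree of $G$ is at most $M$, and a standard greedy coloring (enumerate the countable vertex set and assign to each $a_k$ a color in $\{0,1,\ldots,M\}$ distinct from those of its at most $M$ previously colored neighbors) produces a proper coloring of $G$ with at most $M+1$ colors. Each color class is an independent set of $G$, consisting of points pairwise at Bergman distance at least $\eta$; that is, each color class is a separated sequence. Partitioning $\{a_k\}$ according to these color classes yields the desired decomposition into $M+1$ separated sequences.

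The main obstacle is the geometric estimate of the second paragraph. The map $a\mapsto a^{1,N}$ involves both the nonlinear function $\sqrt{1-|a|}$ and the direction-dependent unit vector $w_1(a)$, so the computation requires decomposing $a-b$ into components tangent and normal to $w_1(a)$, tracking how the ellipsoidal Bergman balls described in (\ref{DPQ}) deform under the map, and extracting an explicit dependence of the threshold $\eta$ on the parameters $n,\delta,r,N$ that is consistent with the specified range of $r$.
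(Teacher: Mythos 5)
Your overall strategy is correct, and it is genuinely different from the paper's. The paper builds the $M+1$ subsequences by a direct greedy assignment: a new point $a_{L+1}$ must avoid every ball $D(a_{m_0,k}^{j,N},r)$ attached to some group $m_0$ (otherwise more than $M$ of the balls would share the point $a_{L+1}$), and the reverse triangle inequality $\beta(a_{L+1},a_{m_0,k})\ge \beta(a_{L+1},a_{m_0,k}^{j,N})-\beta(a_{m_0,k},a_{m_0,k}^{j,N})$ then separates $a_{L+1}$ from that group. That argument produces a positive separation constant only when $r$ exceeds the uniform bound $\tanh^{-1}\sqrt{1-N^2/(2(N^2+1)^2)}$ on $\beta(a,a^{j,N})$ coming from (4.5) of \cite{cckp}; on the range of $r$ stated in the lemma the constant $R$ in the paper's proof is nonpositive. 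You invert the roles: instead of an upper bound on the displacement $a\mapsto a^{1,N}$, you use a modulus-of-continuity bound for it, shrinking the adjacency threshold $\eta$ until $\beta(a_i,a_k)<\eta$ forces $a_{k_0}^{1,N}\in D(a_{k_\ell}^{1,N},r)$. This covers every $r>0$, in particular the small-$r$ range actually stated, and the bounded-degree graph plus greedy colouring is combinatorially the same as the paper's induction. (Your aside that the upper bound on $r$ keeps the balls $D(a^{j,N},r)$, $j=1,\dots,n$, disjoint plays no role in your argument, which only ever invokes the $j=1$ balls.)

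The one real gap is the continuity estimate itself, which you assert, sketch, and then explicitly defer as ``the main obstacle.'' It is true, but two points must be fixed to carry it out. First, your asymptotics $1-|a^{1,N}|^2\approx N^2\sqrt{1-|a|}$ contradicts (\ref{aN}), which gives $N^2(1-|a|)(2-N^2(1-|a|))\approx N^2(1-|a|)$; the square root in the displayed definition of $a^{1,N}$ is a misprint (with it, $a^{1,N}$ need not even lie in $\BB_n$ when $N^2(1-|a|)<\tfrac12$), and the normalization consistent with (\ref{aN}) and with \cite{cckp} is $a^{1,N}=(1-N^2(1-|a|))w_1(a)$. Second, with that normalization the estimate follows from two inequalities valid for $\beta(a,b)<\eta$ and $|a|,|b|$ bounded below: $\bigl||a|-|b|\bigr|\le C\tanh(\eta)(1-|a|)$ (from $\rho(a,b)\ge \bigl||a|-|b|\bigr|/(1-|a||b|)$), and $\bigl||a||b|-\langle a,b\rangle\bigr|\le C\tanh(\eta)(1-|a|)$, the latter because by (\ref{DPQ}) the $P_a$-semiaxis of $D(a,\eta)$ has length $\approx\tanh(\eta)(1-|a|^2)$, which controls $|{\rm Im}\,\langle a,b\rangle|$, while $|a-b|^2\lesssim\tanh^2(\eta)(1-|a|^2)$ controls $|a||b|-{\rm Re}\,\langle a,b\rangle$. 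Inserting these into $1-\rho(x,y)^2=(1-|x|^2)(1-|y|^2)/|1-\langle x,y\rangle|^2$ with $x=a^{1,N}$, $y=b^{1,N}$ and $1-|x|=N^2(1-|a|)$ yields $\rho(a^{1,N},b^{1,N})^2\le C\tanh(\eta)$, which is below $\tanh^2(r)$ once $\eta=\eta(n,r,N)$ is small enough. With this supplied, your proof is complete.
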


\begin{proof}
	Write $a_{1,1}=a_1,a_{2,1}=a_2\cdots,a_{M+1,1}=a_{M+1}$.
	For $a_{L+2}$, by the assumption, there is $m\in \{1,2,\cdots, M+1\}$ such that $a_{M+2} \notin \left\lbrace  D(a_m^{1,N}, r),\cdots, D(a_m^{n,N}, r) \right\rbrace $, that is,
	$
	\beta(a_{M+2},a_m^{j,N})\ge r
	$
	for all $j\in \{1,2,\cdots, n\}$.
	Hence 	by the fact $\rho(z,w)=\tanh\beta(z,w)$ for all $z,w\in \BB_n$ and (4.5) in \cite{cckp},
	\begin{align*}
		\beta(a_{M+2},a_{m,1})= & \beta(a_{M+2}, a_m)\ge  \beta(a_{M+2},a_m^{j,N})-\beta(a_m,a_m^{j,N})\\
		\ge & r-\tanh^{-1}{ \sqrt{1-{ N^2 \over 2(N^2+1)^2 } }}:=R.
	\end{align*}
	Put $a_{m,2}=a_{M+2}$. Inductively, suppose that $L$ is a positive integer and $$\{a_k\}_{k=1}^L=\cup_{m=1}^{M+1}\{a_{m,k}\}_{k=1}^{N_m}$$ such that for any  $m\in  \{1,2,\cdots, M+1\}$ and $ 1\le i,j \le N_m$, $i\ne j$,
	$$
	\beta(a_{m,i},a_{m,j})\ge R.
	$$
	For $a_{L+1}$, we claim that there is $m_0 \in \{1,2,\cdots,M+1\}$ such that 
	$$
	a_{L+1} \notin \cup_{k=1}^{N_{m_0}}\left\lbrace  D(a_{m_0,k}^{1,N}, R),\cdots, D(a_{m_0,k}^{n,N},R ) \right\rbrace.
	$$
If not, suppose that $$a_{L+1}\in \cap_{m=1}^{M+1}\cup_{k=1}^{N_m}\left\lbrace  D(a_{m,k}^{1,N}, R),\cdots, D(a_{m,k}^{n,N}, R) \right\rbrace .$$  Then for any $m\in \{1,2,\cdots,M+1\}$, there is $k_m \in \{1,2,\cdots,N_m\}$ such that $$
a_{L+1}\in \left\lbrace  D(a_{m,k_m}^{1,N}, R),\cdots, D(a_{m,k_m}^{n,N}, R) \right\rbrace  ,
$$ 
Thus,
$$
	a_{L+1} \in \cap_{m=1}^{M+1}\left\lbrace  D(a_{m,k_m}^{1,N}, R),\cdots, D(a_{m,k_m}^{n,N}, R) \right\rbrace,
$$
which gives a contradiction, and  our claim is correct.
Then putting $ a_{m_0,N_{m_0+1}}=a_{L+1}$, we obtain
$$
\{a_k\}_{k=1}^\infty=\cup_{m=1}^{M+1}\{a_{m,k}\}_{k=1}^{\infty}
$$
and each $\{a_{m,k}\}_{k=1}^{\infty}$ is a $R$-separated sequence.
	\end{proof}
	
	 We next give some necessary conditions for $ uC_\varphi-vC_\psi: A^p_\om \to L^q_\mu$ to be bounded.

\begin{thm}\label{qpn}
	Let $0<q< p<\infty$ and $\om\in \mathcal{D}$, $\varphi,\psi \in \mathcal{S}(\BB_n)$, $u,v$ be weights
	and $\mu$ be a positive Borel measure on $\BB_n$. 
	If $ uC_\varphi-vC_\psi: A^p_\om \to L^q_\mu$ is bounded, then  $ \eta_{\varphi,u} +\sigma_{\varphi,r}$ and $ \eta_{\psi,v} +\sigma_{\psi,r}$ are  $q$-Carleson measure for $A^p_\om$.
\end{thm}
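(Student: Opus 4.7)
The plan is to establish condition (iv) of Theorem \ref{qpCarleson} via a Rademacher--Khinchine argument patterned on the proof of its $(i)\Rightarrow(iv)$, combined with the test-kernel reduction used in Theorem \ref{pqn}. By the symmetry in $(\varphi,u)\leftrightarrow(\psi,v)$ it suffices to treat $\nu:=\eta_{\varphi,u}+\sigma_{\varphi,r}$; by Theorem \ref{qpCarleson}, proving that $\nu$ is a $q$-Carleson measure for $A^p_\om$ is equivalent to establishing $\{\hat{\nu}_{\om,R,q/p}(a_k)\}\in l^{p/(p-q)}$ for some $R$-lattice $\{a_k\}$. I would take $\{a_k\}$ with $|a_k|>t_0$ (where $t_0$ is supplied by Lemma \ref{muR}) and absorb the finitely many interior points using the bounded images $(uC_\varphi-vC_\psi)(1)$ and $(uC_\varphi-vC_\psi)(\mathrm{id})$, exactly as in Theorem \ref{pqn}.

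After a unitary reduction that places each $a_k$ on the real axis, Lemma \ref{muR} yields $\nu(D(a_k,R))\lesssim \sum_{b\in J_N(a_k)}R_{s,r,q}(a_k,b)$. Because $J_N(a_k)$ has only $2n+1$ satellite types, it suffices, for each fixed type $a_k\mapsto b_k$, to show that the numbers $c_k:=R_{s,r,q}(a_k,b_k)/(\hat{\om}(a_k)(1-|a_k|^2)^n)^{q/p}$ belong to $l^{p/(p-q)}$. Since $\{b_k\}$ need not be separated across $k$, I would invoke Lemma \ref{decomlem} to split $\{b_k\}$ into finitely many separated subsequences. On each such subsequence, Lemma \ref{Flambda} (for $s$ large) supplies the Rademacher-randomized test function
$$F_t(z)=\sum_k \lambda_k r_k(t)\,f_{b_k,s}(z),\qquad f_{b,s}(z):=\frac{(1-|b|^2)^{s-n/p}}{\hat{\om}(b)^{1/p}(1-\langle z,b\rangle)^s},$$
with $\|F_t\|_{A^p_\om}\lesssim \|\lambda\|_{l^p}$ uniformly in $t\in[0,1]$, where $\{r_k\}$ are Rademacher functions.

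Applying the boundedness hypothesis to $F_t$, integrating over $t\in[0,1]$, and invoking Fubini together with Khinchine's inequality produce
$$\int_{\BB_n}\Bigl(\sum_k |\lambda_k|^2\,|(uC_\varphi-vC_\psi)f_{b_k,s}(z)|^2\Bigr)^{q/2}d\mu(z)\lesssim \|\lambda\|_{l^p}^q.$$
Using the identity $(uC_\varphi-vC_\psi)f_{b,s}(z)=\tfrac{(1-|b|^2)^{s-n/p}}{\hat{\om}(b)^{1/p}(1-\langle \varphi(z),b\rangle)^s}\bigl(u(z)-v(z)Q_b^s(z)\bigr)$, restricting the integral to $z\in\varphi^{-1}(D(a_k,R))$, and exploiting the comparabilities $|1-\langle\varphi(z),b_k\rangle|\approx 1-|a_k|\approx 1-|b_k|$ (from (\ref{aN}) and (\ref{wDzr})) together with $\hat{\om}(b_k)\approx \hat{\om}(a_k)$ (from Lemma \ref{lem1}) and the finite multiplicity from Lemma \ref{cover}, one bounds the integrand below by a multiple of $\sum_k |\lambda_k|^q c_k\,\chi_{\varphi^{-1}(D(a_k,R))}(z)$. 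This gives $\sum_k|\lambda_k|^q c_k\lesssim \|\lambda\|_{l^p}^q$ for every $\lambda\in l^p$, and the duality $(l^{p/q})^{*}=l^{p/(p-q)}$ forces $\{c_k\}\in l^{p/(p-q)}$; summing over the $2n+1$ satellite types and the finitely many separated pieces finishes the proof via Theorem \ref{qpCarleson}. The main obstacle I foresee is precisely the Khinchine lower bound step: one must verify that, after restricting to the preimage sets $\varphi^{-1}(D(a_k,R))$, each $k$-th term in the $l^2$-sum dominates on a set of $\mu$-mass comparable to $R_{s,r,q}(a_k,b_k)$, which is exactly what the pointwise kernel estimates and the finite-multiplicity properties guaranteed by Lemmas \ref{cover} and \ref{decomlem} are designed to supply.
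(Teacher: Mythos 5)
Your proposal is correct and follows essentially the same route as the paper: Rademacher-randomized sums of kernel test functions centered at the satellite points $b_k$ (with norm control from Lemma \ref{decomlem} plus Lemma \ref{Flambda}), Khinchine's inequality after integrating in $t$, restriction to the preimages of Bergman balls with the comparabilities $1-|b_k|\approx 1-|a_k|$ and $\hat{\om}(b_k)\approx\hat{\om}(a_k)$, duality $(l^{p/q})^{*}=l^{p/(p-q)}$, and finally Lemma \ref{muR} together with Theorem \ref{qpCarleson}(iv). The step you flag as the main obstacle is exactly the one the paper resolves with Lemma \ref{cover} and the pointwise kernel estimates, so no gap remains.
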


\begin{proof}
	For $N\ge 36$ and $\delta\in (0,1)$, let $ \{a_k\}$ be a $\delta$-separated sequence in $\BB_n$ such that $ N^4(1-|a_k|)<{1\over 8}$ for each $k$ and $\{b_k\}$ be a sequence given by one of $\{a_k\},\{a_k^{1,N}\},\cdots,$ $\{a_k^{n,N}\}$.
	Consider the function
 $$
F_t(z)=\sum_{k=1}^\infty\lambda_k r_k(t)f_k(z),\,t\in[0,1],\,z\in \BB_n,
$$
where $\lambda=\{\lambda_k\}\in l^p$, $r_k$ are the Rademacher functions and 
$$
f_k(z)= {(1-|a_k|^2)^{s-{n\over p}}  \over  \hat{\om}(a_k)^{1\over p} (1-\langle z,b_k\rangle )^s}
$$
for large $s$.  
Employing Lemma 4.4 in \cite{cckp} and Lemma \ref{decomlem}, there is a positive integer $M$ such that $\{b_k\}$ is a union of $M$ separated sequences. Thus, Lemma \ref{Flambda} yields that $F_t\in A^p_\om$ with $\|F_t\|_{A^p_\om}\lesssim\|\lambda\|_{l^p}$ for $t\in [0,1]$.
Since
 $ uC_\varphi-vC_\psi: A^p_\om \to L^q_\mu$ is bounded, we have
 \begin{align*}
 	\|(uC_\varphi-vC_\psi)F_t\|^q_{L^q_\mu}= & \int_{ \BB_n} \left| \sum_{k=1}^\infty \lambda_k r_k(t){(1-|a_k|^2)^{s-{n\over p}}  \over  \hat{\om}(a_k)^{1\over p} (1-\langle \varphi,b_k\rangle )^s} (u-vQ_{b_k}^s) \right| ^qd\mu\\
 	\lesssim & \|uC_\varphi-vC_\psi\|^q_{A^p_\om \to L^q_\mu} \|\lambda\|^q_{l^p}.
 	\end{align*}
Fix	$0<r<1$ with ${2 \over 1-\tanh r}\ge 36$.
 	Integrating with respect to $t$ on $[0,1]$, and
  applying Fubini's theorem, Khinchines's inequality and Lemma \ref{cover}, we get
   \begin{align*}
   	&\int_{ \BB_n} \sum_{k=1}^\infty |\lambda_k|^q\left|{(1-|a_k|^2)^{s-{n\over p}}  \over  \hat{\om}(a_k)^{1\over p} (1-\langle \varphi,b_k\rangle )^s} (u-vQ_{b_k}^s) \right| ^q \chi_{D(b_k,r)\circ\varphi} d\mu\\
   	\lesssim & \int_{ \BB_n} \left( \sum_{k=1}^\infty \left| \lambda_k {(1-|a_k|^2)^{s-{n\over p}}  \over  \hat{\om}(a_k)^{1\over p} (1-\langle \varphi,b_k\rangle )^s} (u-vQ_{b_k}^s) \right| ^2 \right) ^{q\over 2} d\mu.
   \end{align*}
   Since $ N\ge 36\ge{ 2\over 1-\tanh r}$, by (\ref{De1r}), $D(b_k,r) \subset S(w_1(b_k),N(1-|b_k|))$ for all $k$. Using Lemma 3.6(a) in \cite{cckp},
   $$
   {\chi_{D(b_k,r) \circ\varphi} \over |1-\langle \varphi,b_k\rangle|} \approx { \chi_{D(b_k,r) \circ\varphi} \over N^2(1-|b_k|) } \approx { \chi_{D(b_k,r) \circ\varphi} \over N^2(1-|a_k|) }
   $$
   for all $k$. Therefore,
    \begin{align*}
    	\sum_{k=1}^\infty {  |\lambda_k|^q R_{s,r,q}(a_k,b_k) \over \left(  (1-|a_k|)^{n}\hat{\om}(a_k)\right) ^{q\over p}} 
    	\lesssim & \int_{ \BB_n} \left( \sum_{k=1}^\infty \left| \lambda_k {(1-|a_k|^2)^{s-{n\over p}}  \over  \hat{\om}(a_k)^{1\over p} (1-\langle z,b_k\rangle )^s} (u-vQ_{b_k}^s) \right| ^2 \right) ^{q\over 2} d\mu\\
    	\lesssim & \|uC_\varphi-vC_\psi\|^q_{A^p_\om \to L^q_\mu} \|\lambda\|^q_{l^p},
    \end{align*}
     which combined with the duality $\left( l^{p\over q} \right)^\ast =l^{p\over p-q} $ deduces that
     $$
     \left\| \left\lbrace {  R_{s,r,q}(a_k,b_k) \over \left(  (1-|a_k|)^{n}\hat{\om}(a_k)\right) ^{q\over p}} \right\rbrace \right\| _{l^{ p\over p-q}}
     \lesssim  \|uC_\varphi-vC_\psi\|^q_{A^p_\om \to L^q_\mu}.
     $$
     Write $\nu:= \eta_{\varphi,u} +\sigma_{\varphi,r}$.
    Then by Lemmas \ref{lem1} and \ref{muR},
     \begin{align*}
     	 \hat{\nu}_{\om,r,{q \over p}}(a_k)  \lesssim
     		\sum_{b\in J_N(a_k)}{  R_{s,r,q}(a_k,b_k) \over \left(  (1-|a_k|)^{n}\hat{\om}(a_k)\right) ^{q\over p}}
     \end{align*}
 	 for all large $k$, since $|a_k|\to 1$.
     If $a_k\in t_0\overline{\BB}_n$ for some $t_0\in (0,1)$, then using (\ref{nuEQ}),
     $$
     \sup_{a_k\in t_0\overline{\BB}_n} \hat{\nu}_{\om,r,{q \over p}}(a_k)\lesssim
     \|uC_\varphi-vC_\psi)(id)\|_{L^q_\mu}^q+\|uC_\varphi-vC_\psi)(1)\|_{L^q_\mu}^q<\infty.
     $$
         Therefore, $$ \{\hat{\nu}_{\om,r,{q \over p}}(a_k)\}_{k=1}^\infty\in l^{ p\over p-q},$$
          which implies that $\nu$ is a  $q$-Carleson measure for $A^p_\om$ by Theorem \ref{qpCarleson}. By symmetry,
    we  obtain that $ \eta_{\psi,v} +\sigma_{\psi,r}$ is also a  $q$-Carleson measure for $A^p_\om$.
\end{proof}

From Theorems \ref{qps} and \ref{qpn}, we obtain the following corollary, which contains Theorem \ref{qpcom}.

 \begin{cor}
 	Let $0<q<p<\infty$, $0<r<1$,  $\om\in \mathcal{D}$, $\varphi,\psi \in \mathcal{S}(\BB_n)$, $u,v$ be weights 
	 and $\mu$ be a positive Borel measure on $\BB_n$. 
 	Then the following statements are equivalent:
 	\begin{enumerate}
 		\item[(i)] $ uC_\varphi-vC_\psi: A^p_\om \to L^q_\mu$ is bounded;
 		\item[(ii)] $ uC_\varphi-vC_\psi: A^p_\om \to L^q_\mu$ is  compact;
 		\item[(iii)] $\eta+ \sigma_{\varphi,r}$ or $\eta+ \sigma_{\psi,r}$ are  $q$-Carleson measure for $A^p_\om$;
 		\item[(iv)] $\eta+ \sigma_{\varphi,r}$ and $\eta+ \sigma_{\psi,r}$ are  $q$-Carleson measure for $A^p_\om$.
 	\end{enumerate}
 \end{cor}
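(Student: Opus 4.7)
The plan is to establish the cyclic chain $(iv) \Rightarrow (iii) \Rightarrow (ii) \Rightarrow (i) \Rightarrow (iv)$, treating the corollary as a packaging of results already assembled in this subsection rather than as an independent piece of analysis. The implication $(iv) \Rightarrow (iii)$ I would dispatch as a tautology, and $(iii) \Rightarrow (ii)$ is precisely the sufficient direction recorded in Theorem \ref{qps}. For $(ii) \Rightarrow (i)$ I would invoke only the standard fact that a compact linear map between topological vector spaces is bounded; here $A^p_\om$ and $L^q_\mu$ are Banach spaces when $\min(p,q)\ge 1$ and complete metric spaces otherwise, which is enough.

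The only step requiring a moment's care is $(i) \Rightarrow (iv)$. Applying Theorem \ref{qpn} to the bounded operator $uC_\varphi - vC_\psi$ yields that both $\eta_{\varphi,u} + \sigma_{\varphi,r}$ and $\eta_{\psi,v} + \sigma_{\psi,r}$ are $q$-Carleson measures for $A^p_\om$. Since any positive Borel measure dominated by a $q$-Carleson measure is itself $q$-Carleson, the inequalities $\eta_{\varphi,u}\le \eta_{\varphi,u}+\sigma_{\varphi,r}$ and $\eta_{\psi,v}\le \eta_{\psi,v}+\sigma_{\psi,r}$ show that $\eta_{\varphi,u}$ and $\eta_{\psi,v}$ are each $q$-Carleson. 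Using the elementary fact that a finite sum of $q$-Carleson measures is $q$-Carleson, I would write
$$
\eta + \sigma_{\varphi,r} = (\eta_{\varphi,u} + \sigma_{\varphi,r}) + \eta_{\psi,v}
$$
to conclude that $\eta + \sigma_{\varphi,r}$ is $q$-Carleson for $A^p_\om$; a symmetric decomposition handles $\eta + \sigma_{\psi,r}$, which gives (iv).

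No genuine obstacle arises in the corollary itself: the hard work---the test-function construction based on Lemma \ref{Flambda}, the Khinchine--duality argument, and the decomposition Lemma \ref{decomlem}---has already been carried out in Theorems \ref{qps} and \ref{qpn}. What the corollary does is repackage those statements into a four-fold equivalence, whose proof is purely formal given the stability of the $q$-Carleson class under finite sums and under domination by larger positive Borel measures.
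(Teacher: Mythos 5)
Your proposal is correct and follows essentially the same route as the paper, which obtains the corollary by combining Theorems \ref{qps} and \ref{qpn} without further comment. Your explicit handling of the mismatch between the measures $\eta_{\varphi,u}+\sigma_{\varphi,r}$, $\eta_{\psi,v}+\sigma_{\psi,r}$ produced by Theorem \ref{qpn} and the measures $\eta+\sigma_{\varphi,r}$, $\eta+\sigma_{\psi,r}$ appearing in (iv) --- via monotonicity and finite additivity of the $q$-Carleson class --- is exactly the gluing step the paper leaves implicit.
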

 
 	{\bf Data Availability}  No data was used to support this study.\msk
	
	{\bf Conflicts of Interest}  The authors  declare that they have no conflicts of interest.\msk
	

	{\bf Acknowledgements} 	The corresponding author was supported by  NNSF of China (No. 12371131), STU Scientific Research Initiation Grant(No. NTF23004), 
LKSF STU-GTIIT Joint-research Grant (No. 2024 LKSFG06),  Guangdong Basic and Applied Basic Research Foundation (No. 2023A1515010614).\\

\end{document}